\theoremstyle{plain}
\newtheorem{lemma}{Lemma}[section]
\newtheorem{prop}[lemma]{Proposition}
\newtheorem{theo}[lemma]{Theorem}
\newtheorem{coro}[lemma]{Corollary}
\newtheorem{theoint}[lemma]{Theorem}
\theoremstyle{remark}
\newtheorem{rem}[lemma]{Remark}
\theoremstyle{definition}
\newtheorem{definition}[lemma]{Definition}
\newtheorem{exs}[lemma]{Examples}
\def\hom{\mathrm{Hom}}
\def\id{\mathrm{Id}}
\def\pr{\mathrm{pr}}
\def\incl{\mathrm{inc}}
\def\top{\mathcal{T}\!\!op}
\def\ho{\mathrm{Ho}}
\def\cyl{\mathbb{I}}
\def\cn{\mathrm{C}}
\def\N{\mathbb{N}}
\def\R{\mathbb{R}}
\def\ldash{\textnormal{\textendash}}
\def\sm{\land}  
\def\susp{\Sigma} 
\def\ev{\mathrm{ev}}
\def\tq{\;/\;}
\def\acel{A\textnormal{-}\mathcal{C}el}
\def\cwa{\mathcal{CW}\textnormal{(}A\textnormal{)}}
\def\htcwa{\mathcal{HTCW}\textnormal{(}A\textnormal{)}}
\def\cwagen{\mathcal{CW}\textnormal{(}A\textnormal{)}\textnormal{-}gen}
\def\htcwagen{\mathcal{HTCW}\textnormal{(}A\textnormal{)}\textnormal{-}gen}
\begin{document}

\title{An $A$-based cofibrantly generated model category}

\author{Miguel Ottina}
\email{emottina@uncu.edu.ar}
\thanks{Research partially supported by grant 06/M049 of SeCTyP, UNCuyo.}

\address{Facultad de Ciencias Exactas y Naturales \\
 Universidad Nacional de Cuyo \\ Mendoza, Argentina.}

\begin{abstract}
\noindent
We develop a cofibrantly generated model category structure in the category of topological spaces in which weak equivalences are $A$-weak equivalences and such that the generalized CW($A$)-complexes are cofibrant objects. With this structure the exponential law turns out to be a Quillen adjunction.
\end{abstract}

\subjclass[2010]{55U35}

\keywords{Model Categories, Cell Structures.}

\maketitle

\section{Introduction} 

CW($A$)-complexes were introduced in \cite{MO1} as topological spaces built up out of simple building blocks or cells of a certain type $A$ (called the \emph{core} of the complex). The $A$-cells are cones of iterated suspensions of the space $A$. The theory of CW($A$)-complexes generalizes the classical theory of CW-complexes of J.H.C. Whitehead \cite{Whi} keeping  his geometric intuition and is related to the theory of $A$-cellular spaces of E.D. Farjoun \cite{Far}. There also exist other generalizations of CW-complexes in the literature, for instance, Baues' generalization of complexes in cofibration categories \cite{Bau} and Minian's categorical approach to cell complexes \cite{Min}.

Much of the theory of CW($A$)-complexes was developed in \cite{OT} where we gave a constructive and a descriptive definition and studied the topological, homological and homotopical properties of these spaces obtaining many interesting results. Among them we mention a generalization of Whitehead's theorem and a change of core theorem \cite{MO1}. We also showed how the homology and homotopy groups of a CW($A$)-complex are related to those of $A$.

Apart from the homology and homotopy groups, other homotopy invariants that naturally enter the picture are the $A$-homotopy groups which are defined in a similar way to the homotopy groups but changing the spheres by iterated suspensions of the space $A$. Clearly, these generalize the homotopy groups with coefficients and, if $A$ is locally compact and Hausdorff, can be seen as the homotopy groups of a certain space of maps. Their importance in this setting is that a continuous map between CW($A$)-complexes is a homotopy equivalence if and only if it induces isomorphisms in the $A$-homotopy groups \cite{MO1}.

A similar result for $A$-cellular spaces had been previously proved by Farjoun in \cite{Far}. Hence, it is important and illustrative to compare both theories. This is done in section \ref{section_CWA_vs_A-cell} of this article where we show that if $A$ is the suspension of a finite CW-complex then the class of $A$-cellular spaces coincides with the class of spaces that have the homotopy type of a generalized CW($A$)-complex. However, the theory of CW($A$)-complexes covers a wider variety of cores $A$ since we do not require the space $A$ to have the homotopy type of a CW-complex. On the other hand, the theory of $A$-cellular spaces, for a fixed CW-complex $A$, deals with a broader range of spaces since the class of spaces which have the homotopy type of generalized CW($A$)-complexes is contained in the class of $A$-cellular spaces, as we shall see.

The main objetive of this article is to define a Quillen's closed model category structure \cite{Qui} in the category of topological spaces, suitable for the theory of CW($A$)-complexes. Hence, the class of weak equivalences should be the class of $A$-weak equivalences, that is, the class of continuous maps which induce isomorphisms in the $A$-homotopy groups.

To this end, in section \ref{section_A_based_model_category} we define notions of $A$-fibrations and $A$-cofibrations which together with the $A$-weak equivalences define a Quillen closed model category structure in the category of topological spaces. Moreover, we prove that this model category is cofibrantly generated. With this definitions the (generalized) CW($A$)-complexes turn out to be cofibrant objects and from the cofibrant replacement functor we obtain a CW($A$)-approximation theorem: given a topological space $X$ there exists a generalized CW($A$)-complex $Z$ together with an $A$-weak equivalence $f:Z\to X$.

Also, applying this model category structure we give a generalization of the $A$-based version of Whitehead's theorem given in \cite{MO1}. In addition, we prove that the adjunction given by the exponential law is a Quillen adjunction. 

This gives a theoretical framework for CW($A$)-complexes and generalizes the classical closed model category structure of pointed and path-connected topological spaces. Thus, we can regard CW($A$)-complexes as a suitable geometrically-flavoured generalization of CW-complexes.

\section{Preliminaries}

In the first part of this section we will recall briefly the main concepts on CW($A$)-complexes. In the second part, we will give some results which will be needed later.

\medskip

{\bf Notations}:
Throughout this article we will work in the category of pointed topological spaces, hence homotopies will always be relative to the base point. $\cyl$, $\cn$ and $\susp$ will denote the reduced cylinder, cone and suspension functors respectively. If $X$ is a (pointed) topological space, $i_0^X:X\to\cyl X$ and $i_1^X:X\to\cyl X$ will denote the standard inclusions into the bottom and the top of the cylinder respectively. Sometimes the superscript $X$ will be dropped and we will simply write $i_0$ and $i_1$ instead of $i_0^X$ and $i_1^X$. Similarly, $i_X$ (or simply $i$) will denote the inclusion $X\hookrightarrow \cn X$.

Also, $S^n$ will denote the $n$-sphere, $D^n$ the $n$-disk and $I$ the unit interval.

In addition, if $Z$ is a space, $Z_+$ will denote the space $Z$ with a disjoint basepoint adjoined.

\medskip

Let $A$ be a fixed pointed topological space.

\begin{definition}
Let $n\in\mathbb{N}$. We say that a (pointed) space \emph{$X$ is obtained from a (pointed) space $B$ by attaching $A$-$n$-cells} if there exists a pushout diagram
\begin{displaymath}
\xymatrix{\displaystyle\bigvee_{\alpha\in J}^{\phantom{J}} \Sigma^{n-1}A \ar[r]^(.64){\underset{\alpha\in J}{+}g_\alpha} \ar[d]_i \ar@{}[rd]|{push} & B \ar[d] \\ \displaystyle\bigvee_{\alpha\in J}^{\phantom{J}}\cn\Sigma^{n-1}A \ar[r]_(.67){\underset{\alpha\in J}{+}f_\alpha} & X}
\end{displaymath}
where $J$ is any set.

The \emph{$A$-cells} are the images of the maps $f_\alpha$. The maps $g_\alpha$ are the \emph{attaching maps} of the cells, and the maps $f_\alpha$ are its \emph{characteristic maps}.
We say that \emph{$X$ is obtained from $B$ by attaching $A$-$0$-cells} if $X=B\lor \displaystyle\bigvee_{\alpha\in J}A$.
\end{definition}

For example, the reduced cone $\cn A$ of $A$ is obtained from $A$ by attaching an $A$-1-cell. In particular, $D^2$ is obtained from $D^1$ by attaching a $D^1$-1-cell. Also, the reduced suspension $\Sigma A$ can be obtained from the singleton $\ast$ by attaching an $A$-1-cell.

Note that attaching an $S^0$-$n$-cell is the same as attaching an $n$-cell in the usual sense (but preserving base points), and that attaching an $S^m$-$n$-cell means attaching an $(m+n)$-cell in the usual sense (preserving base points).

\begin{definition}
A \emph{CW($A$)-complex structure on $X$}, is a sequence of spaces $$\ast=X^{-1}, X^0, X^1, \ldots,X^n,\ldots$$ such that, for $n\in\mathbb{N}_0$, $X^n$ is obtained from $X^{n-1}$ by attaching $A$-$n$-cells, and $X$ is the colimit of the diagram
$$\ast=X^{-1}\rightarrow X^0\rightarrow X^1 \rightarrow \ldots \rightarrow X^n \rightarrow \ldots$$
We say that the space $X$ is a \emph{CW($A$)-complex} if it admits some CW($A$)-complex structure. In this case, the space $A$ will be called the \emph{core} or the \emph{base space} of the structure.
\end{definition}

\begin{exs} \ 
\begin{enumerate}
\item A CW($S^0$)-complex is just a pointed CW-complex with all adjunction maps of cells of positive dimension preserving base points. Clearly, any pointed and path-connected CW-complex has the homotopy type of a CW($S^0$)-complex.

\noindent
Similarly, for all $n\in\N$ a CW($S^n$)-complex is a pointed CW-complex with no cells of dimension less than $n$, apart from the base point. Also, any $n$-connected CW-complex has the homotopy type of a CW($S^n$)-complex.
\item The space $D^n$ admits several different CW($D^1$)-complex structures. For instance, we can take $X^r=D^{r+1}$ for $0 \leq r \leq n-1$ since $\cn D^r$ is homeomorphic to $D^{r+1}$ and $\Sigma^{r-1}D^1$ is homeomorphic to $D^r$ for all $r$. We may also define $$X^0=\ldots = X^{n-2}=\ast \qquad \text{ and } \qquad X^{n-1}=D^n$$ since there is a pushout
\begin{displaymath}
\xymatrix{\Sigma^{n-2}D^1\cong D^{n-1} \ar[r] \ar[d]_i \ar@{}[rd]|{push} & \ast \ar[d] \\ \cn\Sigma^{n-2}D^1 \cong \cn D^{n-1} \ar[r] & \Sigma D^{n-1} \cong D^n}
\end{displaymath}
\end{enumerate}
\end{exs}

\medskip

In \cite{MO1} we analyzed the change from a core $A$ to a core $B$ by means of a map $\alpha:A\to B$ and obtained the following result.

\begin{theoint} \label{changing_cores}
Let $A$ and $B$ be pointed topological spaces with closed base points, let $X$ be a CW($A$)-complex and 
let $\alpha:A\rightarrow B$ and $\beta:B\rightarrow A$ be continuous maps.
\begin{enumerate}
\item[(a)] If $\beta\alpha=\id_A$, then there exist a CW($B$)-complex $Y$ and maps $\varphi:X\rightarrow Y$ 
and $\psi:Y\rightarrow X$ such that $\psi\varphi=\id_X$.
\item[(b)] If $\beta$ is a homotopy equivalence, then there exist a CW($B$)-complex $Y$ and a homotopy 
equivalence $\varphi:X\rightarrow Y$.
\item[(c)] If $\beta\alpha=\id_A$ and $\alpha\beta\simeq\id_B$ then there exist a CW($B$)-complex $Y$ and maps \linebreak $\varphi:X\rightarrow Y$ and $\psi:Y\rightarrow X$ such that $\psi\varphi=\id_X$ and $\varphi\psi\simeq\id_Y$.
\end{enumerate}
\end{theoint}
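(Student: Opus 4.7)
The plan is a skeleton-by-skeleton inductive construction of $Y$, $\varphi$, and $\psi$ in which each $A$-$n$-cell of $X$ is replaced by a $B$-$n$-cell of $Y$ whose attaching map is the $\beta$-transfer of the original. Fix a CW($A$)-structure $\ast = X^{-1}\subset X^0\subset X^1 \subset \cdots$ with attaching maps $g_\lambda\colon\Sigma^{n-1}A\to X^{n-1}$ and characteristic maps $f_\lambda\colon\cn\Sigma^{n-1}A\to X^n$. Start with $Y^{-1}=\ast$ and $\varphi_{-1}=\psi_{-1}=\id$, and assume inductively that $Y^{n-1}$, $\varphi_{n-1}\colon X^{n-1}\to Y^{n-1}$, and $\psi_{n-1}\colon Y^{n-1}\to X^{n-1}$ have been constructed with $\psi_{n-1}\varphi_{n-1}=\id$. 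Form $Y^n$ by attaching, for each index $\lambda$, a $B$-$n$-cell along $\tilde g_\lambda := \varphi_{n-1}\circ g_\lambda\circ \Sigma^{n-1}\beta$, and let $\tilde f_\lambda$ denote the resulting characteristic map. Define $\psi_n$ to agree with $\psi_{n-1}$ on $Y^{n-1}$ and with $f_\lambda\circ \cn\Sigma^{n-1}\beta$ on the $\lambda$-th $B$-cell; compatibility on the boundary follows from $\psi_{n-1}\varphi_{n-1}=\id$, so the pushout property yields $\psi_n$. Dually, define $\varphi_n$ to agree with $\varphi_{n-1}$ on $X^{n-1}$ and with $\tilde f_\lambda\circ \cn\Sigma^{n-1}\alpha$ on each $A$-cell; compatibility on the boundary uses $\beta\alpha=\id_A$, and the same identity gives $\psi_n\varphi_n = \id_{X^n}$ cell-by-cell. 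Taking colimits proves (a).

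For (c), we carry out the same construction and simultaneously build, by induction on $n$, a homotopy $H_n\colon\cyl Y^n\to Y^n$ from $\varphi_n\psi_n$ to $\id_{Y^n}$ extending $H_{n-1}$. On each $B$-$n$-cell, $\varphi_n\psi_n$ equals $\tilde f_\lambda\circ \cn\Sigma^{n-1}(\alpha\beta)$; applying $\cn\Sigma^{n-1}$ to the homotopy $\alpha\beta\simeq\id_B$ and postcomposing with $\tilde f_\lambda$ produces a local homotopy from $\varphi_n\psi_n|_{\cn\Sigma^{n-1}B}$ to $\tilde f_\lambda$. Since $B$ has a closed basepoint, the inclusion $\Sigma^{n-1}B\hookrightarrow \cn\Sigma^{n-1}B$ is a closed cofibration, and the homotopy extension property allows us to adjust each local homotopy so that it agrees with $H_{n-1}$ on the boundary; the pushout defining $Y^n$ then glues them into $H_n$, and in the colimit we obtain the required $H\colon\cyl Y\to Y$. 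For (b), a homotopy inverse $\alpha$ of $\beta$ does not satisfy $\beta\alpha=\id_A$ strictly, so we reduce to (c) by a mapping cylinder argument: we replace $\alpha$ by the inclusion $A\hookrightarrow M_\alpha$, which is a closed cofibration admitting a strict retraction onto $A$ built from a chosen homotopy $\beta\alpha\simeq\id_A$, and then we use the deformation retraction $M_\alpha\to B$ to transport the resulting CW($M_\alpha$)-complex to a CW($B$)-complex, obtaining the desired homotopy equivalence $\varphi$.

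The main technical obstacle is the inductive construction of the homotopies $H_n$ in (c): each must extend $H_{n-1}$, remain compatible with the pushout defining $Y^n$, and give rise to a continuous homotopy on the colimit $Y$. This is precisely where the closed-basepoint hypothesis on $A$ and $B$ is used, since it guarantees the cofibration properties required for repeated applications of the homotopy extension property and ensures that the telescope of the $H_n$ assembles into a genuine continuous map on $\cyl Y$.
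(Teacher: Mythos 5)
This theorem is quoted in the paper from \cite{MO1} and is not proved here, so there is no in-paper proof to compare against; I am assessing your argument on its own terms. Your part (a) is correct and is the expected cell-by-cell construction: the compatibility of $\psi_n$ on the boundary follows from $\psi_{n-1}\varphi_{n-1}=\id$, the compatibility of $\varphi_n$ and the identity $\psi_n\varphi_n=\id_{X^n}$ follow from $\beta\alpha=\id_A$, and the passage to the colimit is unproblematic.

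Parts (b) and (c) each contain a genuine gap. In (c) you require $H_n$ to restrict to $\incl\circ H_{n-1}$ on $\cyl Y^{n-1}$ \emph{and} to satisfy $H_ni_0=\varphi_n\psi_n$ and $H_ni_1=\id_{Y^n}$. On each cell this prescribes the map on all of $i_0(\cn\Sigma^{n-1}B)\cup\cyl(\Sigma^{n-1}B)\cup i_1(\cn\Sigma^{n-1}B)$, which by item (b) of Lemma \ref{lemma_homeo_pairs_1} is a copy of $\Sigma^{n}B$ inside $\cyl\cn\Sigma^{n-1}B\cong\cn\Sigma^{n}B$; extending over the cell therefore requires this prescribed map $\Sigma^{n}B\to Y^n$ to be null-homotopic, which is a real obstruction: the two self-homotopies of $\tilde g_\lambda$ (the restriction of $\tilde f_\lambda\circ\cn\Sigma^{n-1}(\textnormal{homotopy }\alpha\beta\simeq\id_B)$ and $H_{n-1}\circ\cyl\tilde g_\lambda$) need not agree up to homotopy rel endpoints. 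The HEP lets you extend a homotopy from an initial condition; it does not let you prescribe both ends and the boundary simultaneously. The standard repair is to prove instead, by induction, that each $\varphi_n$ is a homotopy equivalence via the gluing theorem for homotopy equivalences along cofibrations (the relevant squares commute strictly because $\beta\alpha=\id_A$), pass to the colimit using that the inclusions $Y^{n-1}\hookrightarrow Y^{n}$ are cofibrations, and then deduce $\varphi\psi\simeq\id_Y$ formally from $\psi\varphi=\id_X$ together with $\varphi$ being a homotopy equivalence. In (b) your reduction to (c) points the wrong way: (c) converts a CW($C$)-complex into a CW($D$)-complex only when $C$ is a \emph{strict} retract of $D$. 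Your first step is fine ($A$ is a strict retract of $M_\alpha$, with $i_0\rho\simeq\id_{M_\alpha}$ because $\alpha$ is a homotopy equivalence), but the second step would need $M_\alpha$ to be a strict retract of $B$, whereas what you actually have is the opposite: $B$ is a strict retract of $M_\alpha$ and $jq\simeq\id_{M_\alpha}$ only up to homotopy, so the compatibility condition defining $\varphi_n$ on the cells fails on the nose. Part (b) genuinely requires rectifying homotopy-commutative attaching squares by the HEP (or invoking a homotopy-invariant form of the gluing theorem) rather than a formal reduction to (c).
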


In particular, when the core $A$ is contractible, all CW($A$)-complexes are also contractible.

\medskip

Let $X$ be a (pointed) topological space. Recall that, for $r\in\mathbb{N}_0$, the sets $\pi^A_r(X)$ are defined by $\pi^A_r(X)=[\Sigma^r A,X]$, this is the homotopy classes of (pointed) maps from $\Sigma^r A$ to $X$. Similarly, for $B\subseteq X$ and $r\in\mathbb{N}$ one defines $\pi^A_r(X,B)=[(\cn\Sigma^{r-1} A,\Sigma^{r-1} A),(X,B)]$.

We say that a space $X$ is $A$-$n$-connected if $\pi^A_r(X)=0$ for $0\leq r\leq n$.

We also say that a continuous map $f:X\to Y$ between (pointed) topological spaces is an \emph{$A$-weak equivalence} if it induces isomorphisms $f_\ast:\pi^A_n(X)\to\pi^A_n(Y)$ for all $n\in\N_0$. 

\begin{rem}
Let $X$ and $Y$ be pointed topological spaces and let $f:X\to Y$ be a continuous map. Let $\mathcal{O}$ denote the forgetful functor from the category of pointed topological spaces to the category of topological spaces. Note that if $\mathcal{O}(f)$ is a weak homotopy equivalence then $f$ is an $S^0$-weak equivalence.

The converse of this implication holds if $X$ is path-connected but does not hold in general as the following example shows. Let $X=S^0$ and let $Y=\{(x,y)\in\R^2\tq (x+2)^2+y^2=1\}\cup \{(1,0)\}$ with the subspace topology with respect to $\R^2$ and with $(1,0)$ as the base point. Let $f:X\to Y$ be defined by $f(x)=(x,0)$. The map $f$ is an $S^0$-weak equivalence but not a weak homotopy equivalence.

Moreover, it is easy to prove that a continuous map $f:X\to Y$ is an $S^0$-weak equivalence if and only if $\mathcal{O}(f)_\ast:\pi_0(X)\to\pi_0(Y)$ is a bijection and the map that $\mathcal{O}(f)$ induces between the path components of $X$ and $Y$ that contain the base points is a weak homotopy equivalence.
\end{rem}

\medskip

In \cite{MO1} we gave the following generalization of Whitehead's theorem. In the next section we will compare this result with a similar one of E.D.Farjoun \cite{Far}.

\begin{theoint}
Let $X$ and $Y$ be $CW(A)$-complexes and let $f:X\to Y$ be a continuous map. Then $f$ is a homotopy equivalence 
if and only if it is an $A$-weak equivalence.
\end{theoint}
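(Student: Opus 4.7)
The plan is to prove the nontrivial implication (an $A$-weak equivalence $f$ between $CW(A)$-complexes is a homotopy equivalence) by adapting Whitehead's classical argument to the $A$-based setting. The reverse implication is immediate, since a homotopy equivalence induces bijections on $[\Sigma^r A,-]$ for every $r$, and hence on every $\pi^A_r$.

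First I would replace $f$ by the inclusion of $X$ into its mapping cylinder $M_f$, which carries a natural $CW(A)$-structure containing $X$ as a subcomplex and which deformation retracts onto $Y$. Since $M_f\to Y$ is a homotopy equivalence, and hence an $A$-weak equivalence, the inclusion $X\hookrightarrow M_f$ is an $A$-weak equivalence as well. It therefore suffices to show that if the inclusion of a $CW(A)$-subcomplex $X\subseteq Z$ is an $A$-weak equivalence, then $X$ is a strong deformation retract of $Z$.

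Next I would set up a long exact sequence of $A$-homotopy groups of the pair $(Z,X)$ from the relative groups $\pi^A_n(Z,X)=[(\cn\Sigma^{n-1}A,\Sigma^{n-1}A),(Z,X)]$; the hypothesis forces $\pi^A_n(Z,X)=0$ for all $n$. The vanishing of these groups is precisely a compression statement: any map of pairs $(\cn\Sigma^{n-1}A,\Sigma^{n-1}A)\to (Z,X)$ is homotopic, as a map of pairs, to one landing in $X$. I would then proceed by induction on the skeleta $Z^n$, assuming a retraction $r_{n-1}\colon Z^{n-1}\to X$ and a homotopy $H_{n-1}\colon \cyl Z^{n-1}\to Z$ from the inclusion to the composite of $r_{n-1}$ with $X\hookrightarrow Z$, both rel $X\cap Z^{n-1}$. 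For each characteristic map $f_\alpha\colon\cn\Sigma^{n-1}A\to Z^n$ of an $A$-$n$-cell, the compression statement together with the homotopy extension property for the cofibration $\Sigma^{n-1}A\hookrightarrow\cn\Sigma^{n-1}A$ provides the extension of $r_{n-1}$ and $H_{n-1}$ over the cell. Passing to the colimit of the skeleta, together with the continuity properties of the $CW(A)$-structure, produces the desired deformation retraction of $Z$ onto $X$.

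The main obstacle is the compression step in the inductive extension: one must show that if $\pi^A_n(Z,X)=0$, then any map $\cn\Sigma^{n-1}A\to Z$ whose restriction to $\Sigma^{n-1}A$ lands in $X$ can be deformed, rel $\Sigma^{n-1}A$, to a map into $X$. This requires the cofibration structure of $\Sigma^{n-1}A\hookrightarrow\cn\Sigma^{n-1}A$ and a careful construction of the relative $A$-homotopy long exact sequence, both of which are the core of the machinery developed in \cite{MO1}. Once this compression lemma is available, the inductive assembly of the retraction and the colimit step are essentially formal.
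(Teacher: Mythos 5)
The reduction at the start of your argument is where the proof breaks down. You replace $f$ by the inclusion $X\hookrightarrow M_f$ and assert that $M_f$ ``carries a natural $CW(A)$-structure containing $X$ as a subcomplex.'' Already in the classical case this step needs the cellular approximation theorem (so that $\cyl X\cup_f Y$ inherits a cell structure from a cellular representative of $f$), and in the $CW(A)$ setting neither ingredient is available for a general core $A$. First, there is no $A$-cellular approximation theorem: compressing a map $\Sigma^{k}A\to Y$ into the $A$-$k$-skeleton would require sets of the form $[\Sigma^{k}A,\Sigma^{m}A]$ to vanish for $k<m$, which fails as soon as $A$ has cells in several dimensions (e.g.\ $A=S^1\vee S^5$). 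Second, even for a cellular $f$ one needs $\cyl X$ to be a $CW(A)$-complex with $i_0(X)$ and $i_1(X)$ as subcomplexes, and Proposition \ref{IX_is_CW(A)} establishes this only when $A$ is the suspension of a locally compact Hausdorff space --- a hypothesis absent from the theorem, which is stated for an arbitrary pointed core. Without a $CW(A)$-structure on $M_f$ relative to $X$ there are no skeleta over which to run your induction, so the compression argument has nothing to act on. (There are also smaller points to settle --- triviality of the pointed set $\pi^A_1(Z,X)$ requires the usual group-action refinement of exactness, and the $A$-$0$-cells need separate treatment --- but these are routine.)

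The proof this paper relies on (theorem 5.10 of \cite{MO1}, recalled here rather than reproved) sidesteps the mapping cylinder entirely: from the hypothesis that $f$ is an $A$-weak equivalence one first proves a lifting lemma (any map $\cn\Sigma^{n-1}A\to Y$ lifts along $f$ rel $\Sigma^{n-1}A$ up to homotopy; this is the lemma 5.6 invoked in the proof of Proposition \ref{propfibyed}), then shows by induction on the skeleta of an arbitrary $CW(A)$-complex $Z$ that $f_\ast:[Z,X]\to[Z,Y]$ is a bijection, and finally takes $Z=Y$ and $Z=X$ to produce a homotopy inverse. The generalization proved later in this paper runs instead through the model structure: every object is fibrant, generalized $CW(A)$-complexes are cofibrant, and a weak equivalence between cofibrant--fibrant objects is a homotopy equivalence. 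To salvage your outline you should either restrict to the case where $f$ is the inclusion of a subcomplex (where the compression induction does work, modulo the routine points above) or reorganize the argument around the bijectivity of $f_\ast:[Z,X]\to[Z,Y]$.
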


We also say that a space $X$ is a \emph{generalized CW($A$)-complex} if it is obtained from the singleton $\ast$ by attaching $A$-cells in countably many steps, allowing cells of any dimension to be attached in any step. More precisely, we have the following definition.

\begin{definition}
Let $X$, $A$ and $B$ be (pointed) topological spaces. We say that $X$ \emph{is obtained from $B$ by attaching $A$-cells} if there exists a pushout
\begin{displaymath}
\xymatrix@C=60pt{\underset{\alpha\in J}{\bigvee}\Sigma^{n_\alpha-1}A \ar[r]^(.6){\underset{\alpha\in J}{+}g_\alpha} \ar[d]_i \ar@{}[rd]|{push} & B \ar[d] \\ (\underset{\alpha\in J_0}{\bigvee}A)\lor(\underset{\alpha\in J}{\bigvee}\cn\Sigma^{n_\alpha-1}A) \ar[r]_(.68){\underset{\alpha\in J_0}{+}f^0_\alpha \  + \ \underset{\alpha\in J}{+}f_\alpha} & X}
\end{displaymath}
where $J_0$ and $J$ are sets and where $n_\alpha\in\mathbb{N}$ for all $\alpha\in J$.

We say that $X$ is a \emph{generalized CW($A$)-complex} if $X$ is the colimit of a diagram
$$\ast=X^0 \rightarrow X^1 \rightarrow X^2 \rightarrow \ldots \rightarrow X^n \rightarrow \ldots$$
where $X^n$ is obtained from $X^{n-1}$ by attaching $A$-cells.

We call $X^n$ the \emph{$n$-th layer} of $X$.

We also say that a (pointed) topological pair $(X,B)$ is a \emph{relative CW($A$)-complex} if $X$ is the colimit of a diagram
$$B=X^{-1} \rightarrow X^0 \rightarrow X^1 \rightarrow X^2 \rightarrow \ldots \rightarrow X^n \rightarrow \ldots$$
where $X^n$ is obtained from $X^{n-1}$ by attaching $A$-$n$-cells.

Finally, we say that a (pointed) topological pair $(X,B)$ is a \emph{generalized relative CW($A$)-complex} if $X$ is the colimit of a diagram
$$B=X^0 \rightarrow X^1 \rightarrow X^2 \rightarrow \ldots \rightarrow X^n \rightarrow \ldots$$
where $X^n$ is obtained from $X^{n-1}$ by attaching $A$-cells.
\end{definition}

A generalized CW($S^0$)-complex will be called a generalized CW-complex. It is easy to prove that a generalized CW-complex has the homotopy type of a CW-complex. Moreover, we prove in \cite{MO1} that if $A$ is any CW-complex then a generalized CW($A$)-complex has the homotopy type of a CW-complex. However, a generalized CW($A$)-complex does not necessarily have the homotopy type of a CW($A$)-complex (see \cite{MO2}, example 3.10).

\medskip

Now we will develop some results which will be needed later. We start by studying under which conditions the reduced cylinder of a CW($A$)-complex is again a CW($A$)-complex.

\begin{lemma}
Let $\nu:S^1\to S^1 \lor S^1$ be the usual map inducing the comultiplication in $S^1$. Then there is a pushout
\begin{displaymath}
\xymatrix{ S^1 \ar[r]^(.4)\nu \ar[d] \ar@{}[rd]|{push} & S^1\lor S^1 \ar[d]^i \\ \cn S^1 \ar[r] & \cyl S^1}
\end{displaymath}
where $i(S^1\lor S^1)=i_0(S^1)\lor i_1(S^1)$.
\end{lemma}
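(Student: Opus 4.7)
My plan is to realise both sides of the claimed pushout as quotients of the same closed $2$-disk.

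Parametrise $S^1 = I/\{0,1\}$ with basepoint $\ast=[0]$, so that $\cyl S^1 = (S^1\times I)/(\ast\times I)$ becomes the quotient of the square $I\times I$ which collapses both vertical sides $\{0\}\times I$ and $\{1\}\times I$ to the basepoint. I factor this quotient through an intermediate space $D^2$, namely the quotient of $I^2$ which collapses each of $\{0\}\times I$ and $\{1\}\times I$ separately to its own point; the resulting space is again a closed disk, with two distinguished boundary points $p_0\neq p_1$ coming from the two collapsed edges. The remaining quotient $D^2 \to D^2/(p_0\sim p_1)\cong\cyl S^1$ then simply identifies $p_0$ with $p_1$. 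Under this identification the two boundary arcs of $D^2$ joining $p_0$ to $p_1$ (the images of $I\times\{0\}$ and $I\times\{1\}$) become two loops based at the common image of $p_0$ and $p_1$, and these are precisely the summands $i_0(S^1)$ and $i_1(S^1)$ of $i(S^1\lor S^1)\subset\cyl S^1$.

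Next, since $\cn S^1$ is itself homeomorphic to $D^2$ (with the base $S^1\hookrightarrow\cn S^1$ corresponding to $\partial D^2$), I choose a pointed homeomorphism $\phi\colon\cn S^1\to D^2$ sending $\ast\in S^1$ to $p_0$ and the antipode $[1/2]\in S^1$ to $p_1$, so that the two halves $[0,1/2]$ and $[1/2,1]$ of $S^1$ cover the two boundary arcs of $D^2$. Define $\Phi\colon\cn S^1\to\cyl S^1$ to be the composition $\cn S^1\xrightarrow{\phi} D^2 \to D^2/(p_0\sim p_1)\cong\cyl S^1$. A direct inspection on the boundary shows that $\Phi|_{S^1} = i\circ\nu$: the pinch map $\nu$ identifies $\ast$ and $[1/2]$ with the wedge point, which is exactly what $\Phi$ does on $\partial D^2$ when it identifies $p_0$ with $p_1$.

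To verify the pushout universal property, take a pointed space $Z$ and compatible maps $f\colon S^1\lor S^1\to Z$ and $g\colon\cn S^1\to Z$ with $f\circ\nu = g|_{S^1}$, and consider $\tilde g := g\circ\phi^{-1}\colon D^2\to Z$. The compatibility yields $\tilde g(p_0) = g(\ast) = \ast_Z$ and $\tilde g(p_1) = g([1/2]) = f(\nu([1/2])) = f(\ast) = \ast_Z$, so $\tilde g(p_0) = \tilde g(p_1)$. Hence $\tilde g$ factors through $D^2\to D^2/(p_0\sim p_1)\cong\cyl S^1$ to a map $h\colon\cyl S^1\to Z$ which satisfies $h\circ\Phi = g$ by construction, and $h\circ i = f$ by the same boundary identification used to compute $\Phi|_{S^1}$; uniqueness of $h$ is immediate from the universal property of the quotient. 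The main obstacle is arranging the homeomorphisms of the second paragraph with consistent basepoints and orientations so that $\Phi|_{S^1}$ is exactly $i\circ\nu$ and not some twist of it, which reduces to a careful but essentially routine bookkeeping check.
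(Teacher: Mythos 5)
Your proof is correct and follows essentially the same route as the paper's: both identify $\cyl S^1$ and the pushout with the quotient of a closed disk in which two boundary points are identified. Your version merely spells out the universal property and the compatibility of the maps more explicitly than the paper, which simply exhibits the two spaces as homeomorphic quotients $D^2/\{(-1,0),(1,0)\}$.
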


\begin{proof}
Note that the pushout of the diagram
\begin{displaymath}
\xymatrix{ S^1 \ar[r]^(.4)\nu \ar[d]  & S^1\lor S^1  \\ \cn S^1  & }
\end{displaymath}
is $D^2/\{(-1,0),(1,0)\}$. There are homeomorphisms
$$\cyl S^1 = S^1 \times I / (\{(1,0)\}\times I) \cong (I \times I) / (\{0,1\}\times I) \cong D^2/\{(-1,0),(1,0)\}$$
and hence, the result follows.
\end{proof}

\begin{prop} \label{IX_is_CW(A)}
Let $A'$ be a locally compact and Hausdorff space and let $A=\Sigma A'$. Let $X$ be a CW($A$)-complex. Then the reduced cylinder $\cyl X$ is a CW($A$)-complex. Moreover, $i_0(X)$ and $i_1(X)$ are CW($A$)-subcomplexes of $\cyl X$.
\end{prop}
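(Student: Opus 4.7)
The plan is to endow $\cyl X$ with the CW($A$)-structure whose $n$-th skeleton is
\[
(\cyl X)^n=\cyl(X^{n-1})\cup i_0(X^n)\cup i_1(X^n),
\]
with the convention $X^{-1}=\ast$. With this choice $i_0(X)$ and $i_1(X)$ are unions of subcomplexes of $\cyl X$ by construction, so the content reduces to showing that $(\cyl X)^n$ is obtained from $(\cyl X)^{n-1}$ by attaching $A$-$n$-cells.

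The essential technical input is a dimension-shifted version of the preceding lemma. Since $A'$ is locally compact Hausdorff, so is $A'\sm S^{n-1}$ for every $n\geq 1$; consequently smashing with $A'\sm S^{n-1}$ (and in particular the functor $\cyl=(-)\sm I_+$) preserves pushouts and wedges in the category of pointed spaces. Smashing the pushout of the previous lemma with $A'\sm S^{n-1}$ and applying the natural identifications $(A'\sm S^{n-1})\sm S^1\cong\Sigma^{n-1}A$, $(A'\sm S^{n-1})\sm \cn S^1\cong\cn\Sigma^{n-1}A$, $(A'\sm S^{n-1})\sm(S^1\lor S^1)\cong\Sigma^{n-1}A\lor\Sigma^{n-1}A$ and $(A'\sm S^{n-1})\sm\cyl S^1\cong\cyl\Sigma^{n-1}A$ then yields, for each $n\geq 1$, a pushout
\[
\xymatrix{\Sigma^{n-1}A\ar[r]\ar[d] & \Sigma^{n-1}A\lor\Sigma^{n-1}A\ar[d]^{i_0+i_1}\\ \cn\Sigma^{n-1}A\ar[r] & \cyl\Sigma^{n-1}A}
\]
which exhibits $\cyl\Sigma^{n-1}A$ as obtained from $i_0(\Sigma^{n-1}A)\lor i_1(\Sigma^{n-1}A)$ by attaching a single $A$-$n$-cell.

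The proposition is then proved by induction on $n$. The base case $n=0$ is immediate from $X^0=\bigvee_{\alpha\in J_0}A$, which gives $(\cyl X)^0=\bigvee_{\alpha\in J_0}(i_0(A)\lor i_1(A))$, a wedge of copies of $A$. For the inductive step I decompose the passage from $(\cyl X)^{n-1}$ to $(\cyl X)^n$ into two sub-pushouts: first, $\cyl(X^{n-1})$ is obtained from $(\cyl X)^{n-1}$ by attaching, for each $A$-$(n-1)$-cell of $X$, the cylinder $A$-$n$-cell supplied by the displayed pushout in dimension $n-1$, amalgamated over the corresponding attaching map of $X$; second, $(\cyl X)^n$ is obtained from $\cyl(X^{n-1})$ by attaching the two copies $i_0(e)$ and $i_1(e)$ of each $A$-$n$-cell $e$ attached to pass from $X^{n-1}$ to $X^n$, which is immediate from the pushout defining $X^n$ from $X^{n-1}$ applied separately to $i_0(X^{n-1})$ and $i_1(X^{n-1})$. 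Pasting the two sub-pushouts delivers the required presentation of $(\cyl X)^n$.

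The main obstacle lies in the first sub-pushout: one must identify $\cyl(\cn\Sigma^{n-2}A)$ as being built from $\cyl\Sigma^{n-2}A\cup i_0(\cn\Sigma^{n-2}A)\cup i_1(\cn\Sigma^{n-2}A)$ by attaching an $A$-cell, and then patch these local descriptions along the attaching maps of the $A$-$(n-1)$-cells of $X$ into one global pushout. This is a careful diagram chase that relies on the pasting lemma for pushouts, on the preservation of pushouts and wedges by $\cyl$ and $\cn$ recorded above, and on verifying that the resulting attaching maps factor through $(\cyl X)^{n-1}$.
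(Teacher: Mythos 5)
Your proposal is correct and follows essentially the same route as the paper: induction on the skeleta of $X$ with the cylinder filtration $(\cyl X)^n=\cyl(X^{n-1})\cup i_0(X^n)\cup i_1(X^n)$, splitting each step into attaching the top and bottom copies of the $A$-$n$-cells of $X$ together with the cylinder cells over the lower-dimensional cells, all resting on smashing the $S^1$-cylinder pushout with a locally compact space. The single point you defer as a ``careful diagram chase'' --- that $\cyl\cn\Sigma^{n-2}A$ is obtained from $\cyl(\Sigma^{n-2}A)\cup i_0(\cn\Sigma^{n-2}A)\cup i_1(\cn\Sigma^{n-2}A)$ by attaching one $A$-$n$-cell along $\Sigma^{n-1}A\cong \cn\Sigma^{n-2}A\cup\cyl\Sigma^{n-2}A\cup\cn\Sigma^{n-2}A$ --- is exactly what the paper supplies by smashing the pair homeomorphism $(D^{n},S^{n-1})\cong(\cyl D^{n-1},\,D^{n-1}\cup\cyl S^{n-2}\cup D^{n-1})$ with $A$, i.e.\ the same technique you already use for your dimension-shifted lemma.
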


\begin{proof}
For $n\in\mathbb{N}$ let $J_n$ be an index set for the $A$-$n$-cells of $X$. We proceed by induction in the $A$-skeletons of $X$. For the initial case we have that $\displaystyle X^0=\bigvee_{\alpha\in J_0} A$. Then $\displaystyle \cyl X^0=\bigvee_{\alpha\in J_0} \cyl A$. But $\cyl A$ is a CW($A$)-complex since applying $\ldash\land A'$ to the pushout of the previous lemma gives a pushout
\begin{displaymath}
\xymatrix{ \Sigma A'=A \ar[r] \ar[d] \ar@{}[rd]|{push} & \Sigma A' \lor \Sigma A'=A \lor A \ar[d] \\ \cn \Sigma A' = \cn A \ar[r] & \cyl\Sigma A' = \cyl A}
\end{displaymath}
since $\cn S^1 \sm A' \cong I \sm S^1 \sm A' \cong I \sm \susp A' \cong \cn \susp A'$ and  $\cyl S^1 \sm A' \cong I_+ \sm S^1 \sm A' \cong I_+ \sm \susp A' \cong \cyl \susp A'$. Moreover, $i_0(A)$ and $i_1(A)$ are CW($A$)-subcomplexes of $\cyl A$. Hence, $\cyl X^{0}$ is a CW($A$)-complex and $i_0(X^{0})$ and $i_1(X^{0})$ are CW($A$)-subcomplexes of $\cyl X^{0}$.

Now suppose that $\cyl X^{n-1}$ is a CW($A$)-complex and that $i_0(X^{n-1})$ and $i_1(X^{n-1})$ are CW($A$)-subcomplexes of $\cyl X^{n-1}$. We consider
\begin{displaymath}
\xymatrix{ \underset{\alpha \in J_{n-1} \sqcup J_{n-1}}{\bigvee} \!\!\!\!\!\!\!\!\!\!\Sigma^{n-1} A \ar[r]^(.6){g} \ar[d] \ar@{}[rd]|{push} & \cyl X^{n-1} \ar[d]^{\incl} \\ \underset{\alpha \in J_{n-1} \sqcup J_{n-1}}{\bigvee} \!\!\!\!\!\!\!\!\!\!\cn\Sigma^{n-1} A \ar[r]_(.65){\underset{\alpha}{+}f^n_\alpha} & Y_n}
\qquad
\xymatrix{ \underset{\alpha \in J_{n}}{\bigvee} \Sigma^{n} A \ar[r]^(.6){\underset{\alpha}{+}G^n_\alpha} \ar[d] \ar@{}[rd]|{push} & Y_n \ar[d] \\ \underset{\alpha \in J_{n}}{\bigvee} \cn\Sigma^{n} A \ar[r] & Z_n}
\end{displaymath}
with $g=\left(\underset{\alpha \in J_{n-1}}{+}\!\!\!\! i_0 g^n_\alpha\right) + \left(\underset{\alpha \in J_{n-1}}{+}\!\!\!\! i_1 g^n_\alpha\right)$ where $(g^n_\alpha)_{\alpha\in J_{n-1}}$ are the adjunction maps of the $A$-$n$-cells of $X$, and where the maps $G_\alpha$, $\alpha\in J_n$, are defined as the composition
\begin{displaymath}
\xymatrix@C=50pt{\Sigma(\Sigma^{n-1}A) \ar[r] & \cn(\Sigma^{n-1}A) \underset{\Sigma^{n-1}A}{\bigcup} \cyl(\Sigma^{n-1}A) \underset{\Sigma^{n-1}A}{\bigcup} \cn(\Sigma^{n-1}A) \ar[r]^(.8){f^n_\alpha \cup F_\alpha \cup f^n_\alpha} & Y_n}
\end{displaymath}
where the first map is the standard homeomorphism and where $F_\alpha$ is the composition
\begin{displaymath}
\xymatrix{\cyl(\Sigma^{n-1}A) \ar[r]^(.55){\cyl g^n_\alpha} & \cyl X^{n-1} \ar[r]^(.55){\incl} & Y_n}
\end{displaymath}
We wish to prove that $Z_n$ is homeomorphic to $\cyl X^n$. Note that 
$$Y_n=X^n \underset{X^{n-1}}{\bigcup} \cyl X^{n-1} \underset{X^{n-1}}{\bigcup} X^n.$$

We have that
\begin{displaymath}
\xymatrix{ \underset{\alpha \in J_{n-1} \sqcup J_{n-1}}{\bigvee} \Sigma^{n-1} A \ar[r]^(.45){g} \ar[d] \ar@{}[rd]|{push} & X^{n-1}\times\{0\}\lor X^{n-1}\times\{1\} \ar[r]^(.7){\incl} \ar[d] \ar@{}[rd]|{push} & \cyl X^{n-1} \ar[d] \\ \underset{\alpha \in J_{n-1} \sqcup J_{n-1}}{\bigvee} \cn\Sigma^{n-1} A \ar[r]_(.65){\underset{\alpha}{+}f^n_\alpha} & W_n \ar[r] & Y_n}
\end{displaymath}
and clearly $W_n=X^n\times\{0\}\lor X^n\times\{1\}$.

Now, the homeomorphism $\Sigma(\Sigma^{n-1}A) \longrightarrow \cn(\Sigma^{n-1}A) \underset{\Sigma^{n-1}A}{\bigcup} \cyl (\Sigma^{n-1}A) \underset{\Sigma^{n-1}A}{\bigcup} \cn(\Sigma^{n-1}A)$ extends to a homeomorphism $\cn(\Sigma^{n}A)\longrightarrow \cyl \cn(\Sigma^{n-1}A)$. Indeed, this follows applying $\ldash\land A$ to the homeomorphism of topological pairs $\psi:(D^{n+1},S^n)\to(\cyl D^n, D^{n} \underset{S^{n-1}}{\cup} \cyl S^{n-1} \underset{S^{n-1}}{\cup} D^{n})$ 

Then, we have
\begin{displaymath}
\xymatrix@C=30pt{\underset{\alpha \in J_{n}}{\bigvee}(\cn\Sigma^{n-1}A \underset{\Sigma^{n-1}A}{\bigcup} \cyl\Sigma^{n-1}A \underset{\Sigma^{n-1}A}{\bigcup} \cn\Sigma^{n-1}A) \ar[r] \ar[d] \ar@{}[rd]|{push} & \underset{\alpha \in J_{n}}{\bigvee} \Sigma^{n} A \ar[r]^{\underset{\alpha}{+}G^n_\alpha} \ar[d] \ar@{}[rd]|{push} & Y_n \ar[d] \\\underset{\alpha \in J_{n}}{\bigvee} \cyl\cn\Sigma^{n-1}A \ar[r] & \underset{\alpha \in J_{n}}{\bigvee} \cn\Sigma^{n} A \ar[r] & Z_n}
\end{displaymath}
Note that the first square is a pushout since it commutes and its two horizontal arrows are homeomorphisms.

Note also that the top horizontal composition is $\underset{\alpha}{+}(f^n_\alpha \cup F_\alpha \cup f^n_\alpha)$ and that $Z_n=\cyl X^n$ since $F_\alpha=\incl \circ \cyl g^n_\alpha$. The result follows.
\end{proof}

The following two lemmas will be used in section \ref{section_A_based_model_category}.

\begin{lemma} \label{lemma_homeo_pairs_1}
Let $B$ be a (pointed) topological space. Then there exist homeomorphisms of pointed topological pairs:
\begin{enumerate}
\item[(a)] $(\cyl\cn B, i_0(\cn B)) \cong (\cyl\cn B, i_0(\cn B )\cup \cyl B)$
\item[(b)] $(\cyl\cn B, i_0(\cn B)\cup \cyl (i(B)) \cup i_1(\cn B)  )\cong (\cn \Sigma B, \Sigma B)$
\item[(c)] $(\cyl\,\cyl B,i_0(\cyl B))\cong (\cyl\,\cyl B,\cyl(i_0(B)) \cup i_0(\cyl B)))$
\end{enumerate}
\end{lemma}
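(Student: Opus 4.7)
The plan is to reduce each of the three homeomorphisms to a $2$-dimensional model statement on a square or triangle, and then obtain the claimed pair homeomorphism by smashing with $B$. I first record the identifications: writing $T=[0,1]^2/(\{1\}\times[0,1])$ (a triangular disk with basepoint $p$ the image of the collapsed edge), we have $\cyl\cn B=\cn B\wedge I_+=B\wedge(I\wedge I_+)=B\wedge T$, and under this identification the three edges $T_0$, $T_1$, $T_2$ (images of $[0,1]\times\{0\}$, $\{0\}\times[0,1]$, $[0,1]\times\{1\}$ respectively) correspond to $i_0(\cn B)$, $\cyl(i(B))$, $i_1(\cn B)$. Similarly $\cyl\cyl B=B\wedge([0,1]^2)_+=(B\times[0,1]^2)/(\{*\}\times[0,1]^2)$, and $\cn\Sigma B=B\wedge(S^1\wedge I)=B\wedge\cn S^1$, with $\Sigma B=B\wedge S^1$ sitting inside $B\wedge\cn S^1$ as the image of $B\wedge(S^1\times\{0\})$.

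For parts (a) and (c), I would construct an explicit self-homeomorphism of the square $[0,1]^2$ that sends the bottom edge onto the ``L-shape'' $([0,1]\times\{0\})\cup(\{0\}\times[0,1])$. For part (a) this homeomorphism must additionally fix the right edge $\{1\}\times[0,1]$ setwise, so that it descends to a pointed self-homeomorphism $\bar\phi:T\to T$ fixing $p$ and sending $T_0$ to $T_0\cup T_1$. One constructs $\phi$ by reparameterizing the boundary circle of $[0,1]^2$ (fixing the right edge pointwise and stretching the bottom edge to cover the bottom $\cup$ left arc) and then extending to the interior by the Alexander coning trick from the center. For part (c) no such constraint is needed; any self-homeomorphism $\psi$ of $[0,1]^2$ sending the bottom edge to the L-shape induces $\mathrm{id}_B\times\psi$ on $B\times[0,1]^2$, which preserves $\{*\}\times[0,1]^2$ and hence descends to a self-homeomorphism of $\cyl\cyl B$ with the required property. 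In both cases, smashing with $B$ yields the claimed homeomorphism of pointed pairs.

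For part (b), I would establish the pointed pair homeomorphism $(T,\partial T,p)\cong(\cn S^1,S^1,*)$ directly, and then smash with $B$ using $B\wedge\cn S^1=\cn\Sigma B$ and $B\wedge S^1=\Sigma B$. Both $T$ and $\cn S^1$ are topological $2$-disks with basepoint on the boundary circle: for $T$ this is clear from its description, and for $\cn S^1=S^1\times I/(S^1\times\{1\}\cup\{s_0\}\times I)$ the collapsed subspace is a cellularly-embedded tree in the disk $S^1\times I/(S^1\times\{1\})$, so its quotient is again a disk with basepoint the image of $(s_0,0)$ on the boundary. A pointed homeomorphism $\partial T\to S^1$ (matching the endpoints of the three edges $T_0,T_1,T_2$ with suitable marked points on $S^1$) then extends to the interiors via Alexander coning.

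The main obstacle is the basepoint bookkeeping in part (b): verifying that the quotient $\cn S^1$ of $D^2$ by the ``spoke'' $\{s_0\}\times I\cup S^1\times\{1\}$ really is a disk with basepoint on its boundary, rather than sitting in the interior, and that this basepoint matches $p\in\partial T$. Once this identification is in place, all three parts follow from elementary two-dimensional pictures combined with the functoriality of the smash product in the variable on which we act.
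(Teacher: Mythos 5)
Your proof is correct and follows essentially the same route as the paper: each part is reduced to a homeomorphism of two-dimensional model pairs built from $I\times I$ (which the paper asserts by observing that both pairs in each case are homeomorphic to a standard disk pair such as $(D^2,T)$ or $(D^2,S^1)$, and which you realize explicitly via boundary reparameterization and Alexander coning), and the claimed homeomorphism is then obtained by smashing with $B$. The basepoint bookkeeping you flag in part (b) is exactly the identification the paper leaves implicit, and your verification of it is sound.
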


\begin{proof}
Recall that if $Z$ is a topological space, $Z_+$ denotes the space $Z$ with a disjoint basepoint adjoined. Also, in what follows $0$ will be the base point of the spaces $I$ and $\{0,1\}\subseteq I$ and the point $(1,0)\in\R^2$ will be the base point of the disk $D^2$.

Let $T=\{(x,y)\in S^1 \tq y\leq 0 \}$.

(a) Note that the topological pairs $(I_+\wedge I, \{0\}_+ \wedge I)$ and $(I_+\wedge I, (\{0\}_+ \wedge I) \cup (I_+ \wedge \{0,1\}))$ are homeomorphic since both of them are homeomorphic to $(D^2,T)$. Hence, item (a) follows taking smash product with $B$.

(b) Note that the topological pairs $(I_+\wedge I, (\{0\}_+ \wedge I) \cup (I_+ \wedge \{0,1\}) \cup (\{1\}_+ \wedge I))$ and $(I\wedge S^1, \{0,1\} \wedge S^1)$ are homeomorphic since both of them are homeomorphic to $(D^2,S^1)$. Hence, item (b) follows taking smash product with $B$.

(c) Note that the topological pairs $(I_+\wedge I_+, \{0\}_+ \wedge I_+)$ and $ (I_+\wedge I_+, (\{0\}_+ \wedge I_+) \cup (I_+ \wedge \{0\}_+))$ are homeomorphic since both of them are homeomorphic to $(D^2_+,T_+)$. Hence, item (c) follows taking smash product with $B$.
\end{proof}

\begin{lemma} \label{lemma_homeo_pairs_2}
Let $B$ be a topological space. We define in $\cyl\cn B$ an equivalence relation as follows. If $x,x'\in\cn B$ and $t,t'\in I$ we say that $(x,t)\sim(x',t')$ if and only if $x\in B$ and $x=x'$. Let $q:\cyl\cn B \to \cyl\cn B/\sim$ be the quotient map. Let $\susp_+B=\{[b,t]\in\susp B \;/\; t\geq \frac{1}{2}\}$ and $\susp_-B=\{[b,t]\in\susp B \;/\; t\leq \frac{1}{2}\}$. Then there exists a homeomorphism $\varphi:\cyl\cn B/\!\sim\ \to \cn\susp B$ such that $\varphi(q(i_0(\cn B)))=\susp_-B$ and $\varphi(q(i_1(\cn B)))=\susp_+B$.
\end{lemma}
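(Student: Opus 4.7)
The approach is to recognise both sides as smash products of $B$ with a ``two-variable'' pointed factor, reduce to a pair-homeomorphism between those factors, and construct the latter explicitly.

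Write $\cyl\cn B=I_+\wedge I\wedge B$ and $\cn\susp B=I\wedge S^1\wedge B$ using the standard smash descriptions $\cyl X\cong I_+\wedge X$, $\cn X\cong I\wedge X$, $\susp X\cong S^1\wedge X$ (with $I$ based at $0$ and $S^1=I/\partial I$). The key observation is that $\sim$ only relates points sharing the same $B$-coordinate, since $i_B(b)=i_B(b')$ forces $b=b'$; so $\sim$ is induced, fibrewise in $b$, from the identification on $I_+\wedge I$ that collapses the image of $I\times\{1\}$ to a single point distinct from the basepoint. Let $Q$ denote this pointed quotient of $I_+\wedge I$; then the natural continuous surjection $\cyl\cn B\to Q\wedge B$ identifies exactly the classes of $\sim$ (together with the smash basepoint identifications), so it induces a homeomorphism
\[
\cyl\cn B\,/\!\sim\ \cong\ Q\wedge B.
\]

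The lemma thus reduces to constructing a pointed pair-homeomorphism
\[
\psi:\bigl(Q,\ \{0\}_+\wedge I,\ \{1\}_+\wedge I\bigr)\ \longrightarrow\ \bigl(\cn S^1,\ A_-,\ A_+\bigr),
\]
where $A_\pm\subseteq\cn S^1=I\wedge S^1$ are the arcs of the ``top circle'' $\{1\}\wedge S^1$ chosen so that $A_\pm\wedge B=\susp_\pm B$ (under $\susp B\subseteq\cn\susp B$). Both source and target are topologically $2$-disks with basepoint on the boundary. An explicit homeomorphism is built from the map
\[
I\times I\longrightarrow D^2,\qquad (s,t)\mapsto\bigl(\cos\pi t,\ (2s-1)\sin\pi t\bigr),
\]
which collapses $I\times\{0\}$ onto $(1,0)$ and $I\times\{1\}$ onto $(-1,0)$ and hence descends to a continuous bijection $Q\to D^2$; by compactness of $Q$ and Hausdorffness of $D^2$ this is a homeomorphism. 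Composing with the standard pointed identification $D^2\cong\cn S^1$ that sends $(1,0)$ to the basepoint of $\cn S^1$ (the image of the collapsed radius) and the lower semicircle of $\partial D^2$ onto $A_-$ yields $\psi$ with the required behaviour on the boundary arcs.

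Finally, smashing with $\id_B$ yields $\varphi=\psi\wedge\id_B:Q\wedge B\to\cn S^1\wedge B=\cn\susp B$; by construction $\varphi$ carries $q(i_0(\cn B))=(\{0\}_+\wedge I)\wedge B$ onto $A_-\wedge B=\susp_- B$ and $q(i_1(\cn B))$ onto $\susp_+ B$. The main obstacle is the construction of $\psi$ with the correct matching of boundary arcs. Abstract existence of a pointed homeomorphism $Q\cong\cn S^1$ is clear (both are $D^2$'s with basepoint on the boundary), but aligning the two-edge decomposition $\{0\}_+\wedge I\cup\{1\}_+\wedge I$ of $\partial Q$ with the decomposition $A_-\cup A_+$ of the boundary circle of $\cn S^1$, with the correct orientation and compatibly with all the smash-product basepoint identifications, requires careful bookkeeping.
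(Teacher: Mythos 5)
Your proof is correct and follows essentially the same route as the paper's: both reduce to the two-dimensional factor, identifying $(I_+\wedge I)/(I\times\{1\})$ (the paper's $\cyl\cn S^0/\!\sim_0$) with $D^2\cong\cn S^1$ and then smashing the resulting chain of identifications with $B$. You are somewhat more explicit than the paper about the formula realizing the disk homeomorphism and about matching the two boundary arcs with $\susp_-B$ and $\susp_+B$, which the paper's proof leaves implicit.
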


\begin{proof} Let $\sim_0$ be the equivalence relation in $\cyl\cn S^0$ defined as the one above, that is, if $x,x'\in\cn S^0$ and $t,t'\in I$, then $(x,t)\sim_0(x',t')$ if and only if $x\in S^0$ and $x=x'$. There are homeomorphisms
\begin{displaymath}
\begin{array}{rcl}
\cyl\cn B/\!\sim & \cong & I_+\sm I \sm B /\!\sim 
\ \cong I_+\sm I \sm S^0 \sm B /\!\sim 
\ \cong \cyl\cn S^0 \sm B /\!\sim 
\ \cong \cyl\cn S^0 /\!\sim_0 \sm B  \ \cong  \\ 
& \cong & \cyl D^1 /\!\sim_0 \sm B 
\ \cong D^2 \sm B 
\ \cong \cn S^1 \sm B 
\ \cong I \sm S^1 \sm B 
\ \cong \cn\susp B.
\end{array}
\end{displaymath}
The result follows.
\end{proof}

\section{Comparison between CW($A$)-complexes and Dror Farjoun's $A$-cellular spaces} \label{section_CWA_vs_A-cell}

In this section we will compare CW($A$)-complexes \cite{OT} with $A$-cellular spaces \cite{Far}. We begin by recalling Dror Farjoun's definition of $A$-cellular spaces.

\begin{definition}
Let $A$ be a (pointed) topological space having the homotopy type of a CW-complex. The \emph{class of $A$-cellular spaces} is the smallest class of topological spaces that contains the space $A$ and is closed under homotopy equivalences and pointed homotopy colimits.
\end{definition}

Note that in Dror Farjoun's setting the space $A$ is required to have the homotopy type of a CW-complex, while we allow $A$ to be any space. Hence, our results are more general to a certain extent. As an illustrative example, let us compare our version of the $A$-Whitehead theorem (\cite{MO1}, theorem 5.10) with the one given in \cite{Far} (theorem E.1 of chapter 2). If $A$ is a suspension of a finite CW-complex then both versions coincide (cf. proposition \ref{prop_htcwagen=acel}). But our version covers a wider variety of cores $A$, while Dror Farjoun's one, for a fixed CW-complex $A$, deals with a broader range of spaces since the class of CW($A$)-complexes is strictly contained in the class of $A$-cellular spaces as we shall see.

Until the end of this section, $A$ will denote a fixed pointed topological space having the homotopy type of a CW-complex.

Before going on, we fix the following notation.
\begin{itemize}
\item $A$-$\mathcal{C}el$ will denote the class of $A$-cellular spaces.
\item $\mathcal{CW}$($A$) will denote the class of CW($A$)-complexes.
\item $\mathcal{HTCW}$($A$) will denote the class of spaces having the homotopy type of a CW($A$)-complex.
\item $\mathcal{CW}$($A$)-$gen$ will denote the class of generalized CW($A$)-complexes.
\item $\mathcal{HTCW}$($A$)-$gen$ will denote the class of spaces having the homotopy type of a generalized CW($A$)-complex.
\end{itemize}

We aim to compare these five classes of topological spaces. It is clear that
$$\cwa\subseteq \cwagen \subseteq\htcwagen$$
and
$$\cwa \subseteq \htcwa\subseteq \htcwagen.$$

Also, since the constructions of CW($A$)-complexes and generalized CW($A$)-complexes can be rephrased in terms of homotopy colimits we get that $$\htcwagen\subseteq \acel.$$

Now, let $CW_A$ denote the $A$-cellular approximation functor described in \cite{Far}. From section 2.E of that work, we conclude that if $A$ is the suspension of a finite CW-complex and $Z$ is a topological space which has the homotopy type of a CW-complex then $CW_A(Z)$ can be obtained by an inductive process of attaching $A$-cells. Therefore, $CW_A(Z)$ is a generalized CW($A$)-complex. Then, if $X$ is an $A$-cellular space we obtain that $X$ is homotopy equivalent to $CW_A(X)$ and hence $X$ has the homotopy type of a generalized CW($A$)-complex. Summing up, we have proved the following.

\begin{prop} \label{prop_htcwagen=acel}
If $A$ is the suspension of a finite CW-complex then $$\htcwagen = \acel.$$
\end{prop}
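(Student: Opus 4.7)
The containment $\htcwagen \subseteq \acel$ has already been noted (generalized CW($A$)-complexes are iterated pointed homotopy colimits of wedges of (suspensions of) $A$, and $\acel$ is closed under homotopy equivalences and pointed homotopy colimits). So the plan is to establish the reverse inclusion $\acel \subseteq \htcwagen$ under the hypothesis that $A = \susp A'$ with $A'$ a finite CW-complex.

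Given $X \in \acel$, I would first reduce to showing that the $A$-cellular approximation $CW_A(X)$ of Dror Farjoun has the homotopy type of a generalized CW($A$)-complex. Indeed, by construction $CW_A(X)$ is itself $A$-cellular and the natural map $CW_A(X) \to X$ is an $A$-equivalence; when $X$ is already $A$-cellular, this map is moreover a (weak) homotopy equivalence between spaces having the homotopy type of CW-complexes, hence an honest homotopy equivalence. Thus $X \simeq CW_A(X)$ and it suffices to place $CW_A(X)$ in $\htcwagen$.

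Next I would unpack Farjoun's construction of $CW_A$ from section~2.E of \cite{Far} under the assumption $A = \Sigma A'$. In this case $CW_A(X)$ is produced by a transfinite inductive procedure: start with a wedge of copies of $A$ indexed by $[A,X]_\ast$, and at each stage kill the obstructions to $A$-equivalence with $X$ by attaching $A$-cones along maps from $\susp^k A = \susp^{k+1} A'$. Because $A'$ is finite, every $\susp^k A$ is compact, so each attaching map factors through a finite subcomplex of the previous stage and the transfinite tower reduces to a sequential colimit indexed by $\N$. Each passage $Y_n \to Y_{n+1}$ is then precisely of the form ``$Y_{n+1}$ is obtained from $Y_n$ by attaching $A$-cells'' in the sense of the definition in the preliminaries, so $CW_A(X)$ is a generalized CW($A$)-complex, and consequently $X \in \htcwagen$.

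The main obstacle is the translation from Farjoun's abstractly described obstruction tower (phrased via homotopy colimits of null-homotopies) into a genuine sequence of $A$-cell attachments in the sense of the present paper; this is where the hypothesis that $A$ is a suspension of a \emph{finite} complex is essential, both to ensure compactness of the domains of the attaching maps (so that the process terminates at $\omega$) and to guarantee that the classifying space of null-homotopies is modelled, up to homotopy equivalence, by cones $\cn\susp^{k-1}A$ glued along $\susp^{k-1}A$. Once that identification is made, the rest of the argument is formal bookkeeping, and combined with the already-established inclusions the equality $\htcwagen = \acel$ follows.
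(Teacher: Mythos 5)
Your proposal follows essentially the same route as the paper: the inclusion $\htcwagen\subseteq\acel$ via closure under homotopy colimits, and the reverse inclusion by invoking Farjoun's $A$-cellular approximation functor $CW_A$ from section 2.E of \cite{Far}, observing that for $A$ a suspension of a finite complex the construction of $CW_A(X)$ is an inductive process of attaching $A$-cells (hence a generalized CW($A$)-complex) and that $CW_A(X)\simeq X$ when $X$ is $A$-cellular. Your additional remarks on compactness forcing the tower to be a sequential colimit are a reasonable elaboration of the step the paper simply cites to Farjoun.
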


However, the other inclusions are strict in general. Indeed, in example 3.10 of \cite{MO2} we exhibit a generalized CW($A$)-complex which does not have the homotopy type of a CW($A$)-complex. This shows that $\cwa\subsetneq \cwagen$ and $\htcwa\subsetneq \htcwagen$. On the other hand, the inclusions $\cwa \subseteq \htcwa$ and $\cwagen \subseteq\htcwagen$ are easily seen to be strict.

\section{An $A$-based cofibrantly generated model category} \label{section_A_based_model_category}

In this section we will define appropriate notions of $A$-fibrations and $A$-cofibrations and prove that together with the $A$-weak equivalences they define a closed model category structure in the category of pointed topological spaces. Moreover, we will prove that this model category is cofibrantly generated.

Recall that if $\mathcal{C}$ is a category and $\mathcal{I}$ is a set of maps in $\mathcal{C}$ then the class of $\mathcal{I}$-injectives is the class of maps that have the right lifting property (RLP) with respect to every map in $\mathcal{I}$ and is denoted by $\mathcal{I}$-inj. Also, the class of $\mathcal{I}$-cofibrations is the class of maps that have the left lifting property (LLP) with respect to every map in $\mathcal{I}$-inj and is denoted by $\mathcal{I}$-cof.

\medskip

For the rest of this section, let $A$ be a fixed pointed topological space.

Let 
$$\mathcal{I}_A=\{\Sigma^{n-1}A\hookrightarrow\cn\Sigma^{n-1}A \tq n\in\N\}\cup\{i_0:\cyl^{n-1}A\rightarrow\cyl^{n}A  \tq n\in\N\} \cup \{\ast\to A\}$$
and let
$$\mathcal{J}_A= \{i_0:\cyl^{n-1}A\rightarrow\cyl^{n}A  \tq n\in\N\} \cup \{i_0:\cn\Sigma^{n-1}A\rightarrow\cyl\cn\Sigma^{n-1}A  \tq n\in\N\}$$
where $\cyl^0$ and $\Sigma^0$ denote the identity functors.

The set $\mathcal{I}_A$ will be the set of generating cofibrations and the set $\mathcal{J}_A$ will be the set of generating trivial cofibrations of our model category, as we shall see.

\begin{definition}
Let $E$ and $B$ be (pointed) topological spaces and let $p:E\to B$ be a continuous map. We say that $p$ is an \emph{$A$-fibration} if it has the homotopy lifting property (HLP) with respect to the spaces $A$ and $\cn\Sigma^{n-1}A$, $n\in\N$, and $\cyl^nA$, $n\in\N$. 

The class of $A$-fibrations will be denoted by $\textnormal{Fib}$.
\end{definition}

Note that $\textnormal{Fib}=\mathcal{J}_A\textnormal{-inj}$.

Let $\textnormal{WE}$ be the class of $A$-weak equivalences and let $\textnormal{Cof}$ be the class of maps which have the left lifting property with respect to the class $\textnormal{Fib}\cap\textnormal{WE}$. The maps in the class $\textnormal{Cof}$ will be called $A$-cofibrations. If $X$ is a topological space such that the map $\ast\to X$ is an $A$-cofibration, we will say that $X$ is an $A$-cofibrant space.

The following proposition shows that $S^0$-fibrations are just Serre fibrations.

\begin{prop} \label{prop_S0_fib}
Let $X$ and $Y$ be pointed topological spaces and let $p:X\to Y$ be a continuous map. Let $\mathcal{O}$ denote the forgetful functor from the category of pointed topological spaces to the category of topological spaces. Then the map $p$ is an $S^0$-fibration if and only if $\mathcal{O}(p)$ is a Serre fibration.
\end{prop}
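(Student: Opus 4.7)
The plan is to translate the defining pointed homotopy lifting properties of an $S^0$-fibration into unpointed ones for $\mathcal{O}(p)$, using the unpointed identifications $\cyl^n S^0 \cong (I^n)_+$ (since $\cyl(Z_+)=(Z\times I)_+$) and $\cn\susp^{n-1}S^0 \cong D^n$ (with basepoint on the boundary), together with the basic principle that a pointed map out of $Z_+$ is precisely an unpointed map out of $Z$. Both implications will then come down to checking that these identifications turn the $i_0$ maps appearing in the HLP into the standard $i_0$'s, so that the respective lifting problems on each side correspond.

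For the direction ``$S^0$-fibration $\Rightarrow$ Serre fibration'', I would start from an unpointed HLP problem for $\mathcal{O}(p)$ with respect to $D^n\cong I^n$; adjoining disjoint basepoints converts it into a pointed HLP problem with respect to $(I^n)_+=\cyl^n S^0$ (or with respect to $S^0$ when $n=0$). The $S^0$-fibration property produces a pointed lift, and forgetting the adjoined basepoint gives the sought unpointed lift. The converse direction is transparent for HLP with respect to $S^0$ and $\cyl^n S^0$: by the same dictionary it is literally equivalent to unpointed HLP with respect to $I^n\cong D^n$, hence is supplied by the Serre fibration hypothesis.

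The only step that is not pure bookkeeping is pointed HLP with respect to $\cn\susp^{n-1}S^0 \cong D^n$. Here pointedness of the data $f$ and $H$ forces any lift $\tilde H$ to be constant at $*_X$ along $\{*\}\times I \subseteq D^n\times I$, so what is really required is a solution of the lifting problem for the CW-pair $(D^n\times I,\,D^n\times\{0\}\cup\{*\}\times I)$, with prescribed partial lift $f$ on $D^n\times\{0\}$ and constant at $*_X$ on $\{*\}\times I$. I would dispose of this by invoking the standard fact that every Serre fibration has the HLP with respect to all relative CW-inclusions. This is the main, and essentially only, obstacle in the argument; everything else reduces to careful tracking of basepoints via the adjunction $(-)_+\dashv \mathcal{O}$.
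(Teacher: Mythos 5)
Your proof is correct and takes essentially the same route as the paper's: the same $(-)_+\dashv\mathcal{O}$ bookkeeping disposes of $S^0$ and $\cyl^n S^0$, and for $\cn\Sigma^{n-1}S^0$ you reduce, exactly as the paper does, to a relative lifting problem along $D^n\times\{0\}\cup\{*\}\times I\hookrightarrow D^n\times I$ forced by the collapsing in the reduced cylinder. The paper merely proves inline (via the homeomorphism of pairs $(I^{n+1},I^n\times\{0\})\cong(I^{n+1},I^n\times\{0\}\cup\{0\}\times I^n)$ and a descent to the quotient) the standard fact about Serre fibrations and CW pairs that you cite.
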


\begin{proof}
Suppose first that $p$ is an $S^0$-fibration. By definition, $p$ has the HLP with respect to $\cyl^nS^0$ for all $n\in\N_0$. Note that, for $n\in \N$, $\cyl^nS^0$ is equal to $I^n_+$ which is homeomorphic to $D^n_+$ and $\cyl^0S^0=S^0$ is homeomorphic to $D^0_+$. It follows that $\mathcal{O}(p)$ has the (unbased) homotopy lifting property with respect to $D^n$ for all $n\in\N_0$. Hence, $\mathcal{O}(p)$ is a Serre fibration.

Conversely, if the map $\mathcal{O}(p)$ is a Serre fibration then it has the homotopy lifting property with respect to $D^n$ for all $n\in\N_0$. Since $\cyl^nS^0$ is homeomorphic to $D^n_+$ for all $n\in\N_0$ it follows that the map $p$ has the HLP with respect to $\cyl^nS^0$ for all $n\in\N_0$. It remains to prove that $p$ has the HLP with respect to $\cn\Sigma^{n-1}S^0$ for all $n\in\N$.

Since $\cn\Sigma^{n-1}S^0$ is homeomorphic to $D^n$ which is homeomorphic to $\cyl^{n-1}I$, it suffices to prove that $p$ has the RLP with respect to the inclusion $i_0:\cyl^{n-1}I \to \cyl^{n}I$ for all $n\in\N$. 

Let $n\in\N$ and let $f:\cyl^{n-1}I\to X$ and $H:\cyl^{n}I\to Y$ be continuous maps such that $Hi_0=pf$. For $j\in\N$ let $q_{j}:I^j\to\cyl^{j-1}I\cong I^j / (\{0\}\times I^{j-1})$ be the quotient map. Let $x_0$ and $y_0$ be the base points of $X$ and $Y$ respectively and let $F:(I^n\times\{0\})\cup(\{0\}\times I^n)\to X$ be defined by $F(z,0)=f(q_n(z))$ and $F(0,z)=x_0$ for $z\in I^n$. Clearly, the map $F$ is well-defined  and continuous.

There is a commutative diagram of unpointed spaces and maps
\begin{displaymath}
\xymatrix{(I^n\times\{0\})\cup(\{0\}\times I^n) \ar[r]^(.68){F} \ar[d]_{\incl} & \mathcal{O}(X) \ar[d]^{\mathcal{O}(p)} \\ I^{n+1}
  \ar[r]_(.5){Hq_{n+1}} & \mathcal{O}(Y)}
\end{displaymath}
Now, since the topological pairs $(I^{n+1},I^n\times\{0\})$ and $(I^{n+1},(I^n\times\{0\})\cup(\{0\}\times I^n))$ are homeomorphic and $\mathcal{O}(p)$ is a Serre fibration, there exists a continuous map $G:I^{n+1}\to \mathcal{O}(X)$ such that $G\,\incl=F$ and $\mathcal{O}(p)G=Hq_{n+1}$. Since $G(0,z)=F(0,z)=x_0$ for all $z\in I^n$, there exists a continuous map $\overline{G}:\cyl^n I \to \mathcal{O}(X)$ such that $\overline{G}q_{n+1}=G$. The map $\overline{G}$ is the desired lift.
\end{proof}

\begin{prop}
Let $E$ and $B$ be topological spaces and let $p:E\to B$ be an $A$-fibration. Then $p$ has the homotopy lifting property with respect to the class of generalized CW($A$)-complexes.
\end{prop}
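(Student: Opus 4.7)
The plan is to reduce the lifting problem to successive extensions along the layers of the generalized CW($A$)-complex and to solve each of those using the pieces of the homotopy lifting property built into the definition of an $A$-fibration. Let $Z=\colim_n Z^n$ with $Z^0=\ast$, and let $f\colon Z\to E$ and $H\colon\cyl Z\to B$ satisfy $Hi_0=pf$. I would inductively construct $\widetilde{H}_n\colon \cyl Z^n\to E$ with $\widetilde{H}_n i_0 = f|_{Z^n}$, $p\widetilde{H}_n = H|_{\cyl Z^n}$, and $\widetilde{H}_n|_{\cyl Z^{n-1}}=\widetilde{H}_{n-1}$; the base case is trivial, and the final lift is obtained by passing to the colimit, using that the reduced cylinder functor preserves colimits.

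For the inductive step I would exploit the pushout defining $Z^n$ from $Z^{n-1}$: applying $\cyl$ yields a pushout presenting $\cyl Z^n$ as $\cyl Z^{n-1}$ together with the cylinders $\cyl A$ over the newly attached $0$-cells and the cylinders $\cyl\cn\Sigma^{n_\alpha-1}A$ over the higher cells, glued along the cylinders $\cyl\Sigma^{n_\alpha-1}A$ of the attaching maps. To define $\widetilde{H}_n$ it suffices to specify compatible extensions on each of these pieces. On $\cyl Z^{n-1}$ I take $\widetilde{H}_{n-1}$. Over each $0$-cell summand, the HLP of $p$ with respect to $A$ extends the given value of $f$ at $t=0$ to a lift of the restriction of $H$ to that $\cyl A$.

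The delicate case is that of a higher cell. Over a cell with characteristic map $f_\alpha$ and attaching map $g_\alpha$, a partial lift of $H\circ\cyl f_\alpha$ is already defined on $i_0(\cn\Sigma^{n_\alpha-1}A)\cup\cyl \Sigma^{n_\alpha-1}A$: on $i_0(\cn\Sigma^{n_\alpha-1}A)$ by $f\circ f_\alpha$, and on $\cyl\Sigma^{n_\alpha-1}A$ by $\widetilde{H}_{n-1}\circ\cyl g_\alpha$, with compatibility on the overlap $i_0(\Sigma^{n_\alpha-1}A)$ guaranteed by the attaching pushout $f_\alpha\circ i=g_\alpha$. However, the definition of $A$-fibration supplies only the HLP with respect to $\cn\Sigma^{n_\alpha-1}A$, which extends lifts defined on $i_0(\cn\Sigma^{n_\alpha-1}A)$ alone. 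I would therefore invoke Lemma \ref{lemma_homeo_pairs_1}(a), whose pair homeomorphism $(\cyl\cn B,i_0(\cn B))\cong(\cyl\cn B,i_0(\cn B)\cup \cyl B)$ (taken with $B=\Sigma^{n_\alpha-1}A$) transports the two-sided lifting problem to a one-sided one; after applying the HLP with respect to $\cn\Sigma^{n_\alpha-1}A$ and transporting back, I obtain the desired extension over the cell.

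The main obstacle is precisely this two-sided boundary condition over each higher cell, which no single HLP built into the definition of an $A$-fibration addresses directly; Lemma \ref{lemma_homeo_pairs_1}(a) is the technical device that converts it into a standard HLP problem. Once that step is in place, the universal property of the pushout glues the cell-wise extensions with $\widetilde{H}_{n-1}$ to produce $\widetilde{H}_n$, and the colimit of the compatible family $\{\widetilde{H}_n\}$ yields the required lift $\widetilde{H}\colon\cyl Z\to E$.
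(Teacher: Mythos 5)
Your proof is correct and follows exactly the route the paper intends: the paper's proof is the single remark that the result ``follows by standard arguments applying the homeomorphism given in item (a) of Lemma \ref{lemma_homeo_pairs_1},'' and your layer-by-layer induction with the two-sided boundary condition over each higher cell resolved by that pair homeomorphism is precisely the standard argument being invoked. Your write-up simply supplies the details the paper omits.
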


The proof follows by standard arguments applying the homeomorphism given in item (a) of \ref{lemma_homeo_pairs_1}.

\medskip

Note that if the core $A$ is the suspension of a locally compact and Hausdorff space then a map $p$ is an $A$-fibration if and only if it has the homotopy lifting property with respect to the spaces $A$ and $\cn\Sigma^{n-1}A$, $n\in\N$, since in this case, by \ref{IX_is_CW(A)}, $\cyl^n A$ is a CW($A$)-complex for all $n\in\N$.

\bigskip

We will prove now some results which will be needed for our purposes.

\begin{prop} \label{propfibyed}
Let $E$, $B$ topological spaces and let $p:E\rightarrow B$ be a continuous map. If $A$ is an $H$-cogroup, the following are equivalent
\begin{enumerate}
\item[(a)] $p\in \textnormal{Fib}\cap\textnormal{WE}$.
\item[(b)] The map $p$ has the RLP with respect to the inclusion maps $\Sigma^{n-1}A\hookrightarrow \cn\Sigma^{n-1}A$ $\forall n\in\mathbb{N}$ and with respect to the maps $\ast \to A$ and $i_0: \cyl^{n-1} A\to \cyl^n A$ $\forall n\in\mathbb{N}$.
\end{enumerate}
Moreover, any of these implies the following
\begin{enumerate}
\item[(c)] The map $p$ has RLP with respect to the inclusion $B\hookrightarrow X$ for all generalized relative CW($A$)-complexes $(X,B)$.
\end{enumerate}
\end{prop}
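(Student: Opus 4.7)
The plan is to prove (a) $\Leftrightarrow$ (b) by establishing the two implications separately, using Lemma \ref{lemma_homeo_pairs_1} and the $H$-cogroup structure on $A$, which endows $\pi_n^A$ with a natural group structure for all $n \geq 0$. The implication (b) $\Rightarrow$ (c) then follows by a standard transfinite composition argument. Since $\textnormal{Fib} = \mathcal{J}_A\textnormal{-inj}$, condition (a) states that $p \in (\mathcal{J}_A\textnormal{-inj}) \cap \textnormal{WE}$, while (b) states that $p \in \mathcal{I}_A\textnormal{-inj}$.

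For (b) $\Rightarrow$ (a), I would first prove the fibration part: the only nontrivial case is HLP with respect to $\cn\Sigma^{n-1}A$, i.e., RLP with respect to $i_0: \cn\Sigma^{n-1}A \to \cyl\cn\Sigma^{n-1}A$. By Lemma \ref{lemma_homeo_pairs_1}(a), this is equivalent to the RLP with respect to the inclusion $J \hookrightarrow \cyl\cn\Sigma^{n-1}A$, where $J = i_0(\cn\Sigma^{n-1}A) \cup \cyl\Sigma^{n-1}A$. I factor this as (i) $J \hookrightarrow J \cup i_1(\cn\Sigma^{n-1}A)$, which is a pushout of $\Sigma^{n-1}A \hookrightarrow \cn\Sigma^{n-1}A$ along $\Sigma^{n-1}A \xrightarrow{i_1} J$, followed by (ii) $J \cup i_1(\cn\Sigma^{n-1}A) \hookrightarrow \cyl\cn\Sigma^{n-1}A$, which by Lemma \ref{lemma_homeo_pairs_1}(b) is homeomorphic as a pair to $\Sigma^n A \hookrightarrow \cn\Sigma^n A$. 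Both factors lie in $\mathcal{I}_A\textnormal{-cof}$, so $p$ has the RLP against their composition. For the weak equivalence part: surjectivity of $p_\ast$ on $\pi_n^A$ follows from the RLP with respect to $\ast \to \Sigma^n A$, which is in $\mathcal{I}_A$ for $n=0$ and a pushout of $\Sigma^{n-1}A \hookrightarrow \cn\Sigma^{n-1}A$ along $\Sigma^{n-1}A \to \ast$ for $n \geq 1$. For injectivity, given $[g] \in \pi_n^A(E)$ with $p_\ast[g] = 0$, any pointed null-homotopy of $pg$ yields an extension $h : \cn\Sigma^n A \to B$; applying the RLP against $\Sigma^n A \hookrightarrow \cn\Sigma^n A$ to the resulting square extends $g$ over $\cn\Sigma^n A$, showing $[g] = 0$.

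For (a) $\Rightarrow$ (b), the RLP with respect to $i_0: \cyl^{n-1}A \to \cyl^n A$ is immediate from $p \in \textnormal{Fib}$. For $\ast \to A$, given $f : A \to B$, surjectivity of $p_\ast$ on $\pi_0^A$ yields $g : A \to E$ with $pg \simeq f$; HLP with respect to $A$ then straightens this to a strict lift. The main obstacle is the RLP for $\Sigma^{n-1}A \hookrightarrow \cn\Sigma^{n-1}A$: given a square with top $g$ and bottom $f$, contractibility of $\cn\Sigma^{n-1}A$ shows that $pg$ is null-homotopic, hence (by injectivity of $p_\ast$ on $\pi_{n-1}^A$) so is $g$; pick any extension $\eta : \cn\Sigma^{n-1}A \to E$ of $g$. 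The two cone-maps $p\eta$ and $f$ glued along $\Sigma^{n-1}A$ define a class $d \in \pi_n^A(B)$ via the $H$-cogroup structure, which lifts to some $\omega \in \pi_n^A(E)$ by surjectivity of $p_\ast$. A pinch construction combining $\eta$ with a representative of $-\omega$ produces $\eta' : \cn\Sigma^{n-1}A \to E$ with $\eta'|_{\Sigma^{n-1}A} = g$ and $p\eta' \simeq f$ relative to $\Sigma^{n-1}A$. To promote this rel-homotopy to a strict equality, I apply HLP with respect to $\cn\Sigma^{n-1}A$ reinterpreted (again via Lemma \ref{lemma_homeo_pairs_1}(a)) as the relative HLP for the pair $(\cn\Sigma^{n-1}A, \Sigma^{n-1}A)$, lifting the homotopy $p\eta' \simeq f$ while keeping the constant lift $g$ along $\Sigma^{n-1}A$; evaluation of the resulting lift at time $1$ yields the required map $\tilde f$.

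Finally, for (b) $\Rightarrow$ (c), a generalized relative CW($A$)-complex $(X,B)$ is built as a countable sequence of pushouts of coproducts of maps of the form $\Sigma^{n_\alpha-1}A \hookrightarrow \cn\Sigma^{n_\alpha-1}A$ and $\ast \to A$, all belonging to $\mathcal{I}_A$. Since $\mathcal{I}_A\textnormal{-inj}$ is closed under pushouts, coproducts, and transfinite compositions, $p \in \mathcal{I}_A\textnormal{-inj}$ has the RLP against $B \hookrightarrow X$. The chief difficulty in the whole argument is the pinch-and-lift construction in (a) $\Rightarrow$ (b), which requires the $H$-cogroup hypothesis to define the ``difference'' class and crucially exploits Lemma \ref{lemma_homeo_pairs_1}(a) to convert HLP into relative HLP.
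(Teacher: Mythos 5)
Your argument is correct and rests on the same two pillars as the paper's --- the homeomorphisms of pairs in Lemma \ref{lemma_homeo_pairs_1} and the fact that the $H$-cogroup hypothesis is needed only to make $\pi_0^A$ a group --- but three of its key steps are executed along genuinely different lines. For the fibration half of (b)$\Rightarrow$(a) the paper builds the lift by explicit formulas: it writes down a reparametrized homotopy $\overline G$ and a partial lift $F$ on $i_0(\cn\Sigma^{n-1}A)\cup\cyl(i(\Sigma^{n-1}A))\cup i_1(\cn\Sigma^{n-1}A)$, invokes Lemma \ref{lemma_homeo_pairs_1}(b) once, and then reparametrizes the resulting lift; your coordinate-free factorization of $J\hookrightarrow\cyl\cn\Sigma^{n-1}A$ into a pushout of a generating cofibration followed by the pair of Lemma \ref{lemma_homeo_pairs_1}(b) reaches the same lift purely from the closure properties of $\mathcal{I}_A\textnormal{-inj}$ under cobase change and composition, at the cost only of the point-set check that $J\cup i_1(\cn\Sigma^{n-1}A)$ carries the pushout (equivalently, subspace) topology. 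For injectivity of $p_\ast$ on $\pi_n^A$ with $n\ge1$ the paper argues directly that $pf\simeq pg$ forces $f\simeq g$, pushing the homotopy through $\cyl\cn\Sigma^{n-1}A/\!\sim\;\cong\cn\Sigma^{n}A$ via Lemma \ref{lemma_homeo_pairs_2}; you instead use that $\pi_n^A$ is a group for every $n\ge0$ (by suspension for $n\ge1$, by the $H$-cogroup hypothesis for $n=0$) and check only that the kernel is trivial, which is exactly the paper's $n=0$ argument promoted uniformly to all $n$. Finally, for the cone inclusions in (a)$\Rightarrow$(b) the paper outsources the key step to Lemma 5.6 of \cite{MO1}, whose conclusion --- a map $\eta'$ extending the given $g$ with $p\eta'\simeq f$ rel $\Sigma^{n-1}A$ --- is precisely what your pinch-and-difference-class construction produces inline (note that the equivalence ``difference class vanishes iff the two cone maps are homotopic rel the base'' is itself the content of Lemma \ref{lemma_homeo_pairs_2}); the ensuing strictification via Lemma \ref{lemma_homeo_pairs_1}(a) is identical in both proofs. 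In sum, your route is more self-contained and more structural where the paper computes or cites; the paper's explicit formulas buy nothing essential beyond sidestepping the pushout-topology verification.
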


\begin{proof}
(a) $\Rightarrow$ (b) We consider first a commutative diagram
\begin{displaymath}
\xymatrix{\Sigma^{n-1}A \ar[r]^(0.6)f \ar[d]_i & E
  \ar[d]^p \\ \cn\Sigma^{n-1}A \ar[r]_(0.65)h & B }
\end{displaymath}
where $n\in\N$.

Since $p$ is an $A$-weak equivalence, from lemma 5.6 of \cite{MO1} we get that there exists a continuous map 
$h':\cn\Sigma^{n-1}A\rightarrow E$ such that $h'|_{\Sigma^{n-1}A}=f$
and $ph'\simeq h$ rel $\Sigma^{n-1}A$.
Let $H:\cyl\cn\Sigma^{n-1}A\rightarrow B$ be a homotopy from $ph'$ to $h$ relative to $\Sigma^{n-1}A$. There is a commutative diagram
\begin{displaymath}
\xymatrix{i_0(\cn\Sigma^{n-1} A)\cup \cyl(\Sigma^{n-1}A) \ar[r]^(.78){h'\cup \cyl f}
  \ar[d]_\incl & E \ar[d]^p \\ \cyl\cn\Sigma^{n-1} A \ar[r]_(.6){H} & B }
\end{displaymath}
Since $p$ is an $A$-fibration, from item (a) of \ref{lemma_homeo_pairs_1} we conclude that there exists a map $H':\cyl\cn\Sigma^{n-1} A \rightarrow E$ such that $pH'=H$ and $H'\incl=h'\cup \cyl f$. The map $H'i_1:\cn\Sigma^{n-1}A\rightarrow E$ is the desired lift.

Consider now a commutative diagram
\begin{displaymath}
\xymatrix{\ast \ar[r] \ar[d] & E
  \ar[d]^p \\ A \ar[r]_g & B }
\end{displaymath}
Since $p$ is an $A$-weak equivalence there exists a continuous map $\overline{g}:A\to E$ such that $p\overline{g} \simeq g$. Let $H:\cyl A\to B$ be a homotopy between $p\overline{g}$ and $g$. There is a commutative diagram
\begin{displaymath}
\xymatrix{A \ar[r]^{\overline{g}} \ar[d]_{i_0} & E
  \ar[d]^p \\ \cyl A \ar[r]_(.52){H} & B }
\end{displaymath}
Since $p$ is an $A$-fibration there exists a continuous map $\overline{H}:\cyl A\to E$ such that $\overline{H}i_0=\overline{g}$ and $p\overline{H}=H$. Hence, $p\overline{H}i_1=Hi_1=g$. Thus $\overline{H}i_1$ is the desired lift.

Finally, the RLP with respect to the maps $i_0: \cyl^{n-1} A\to \cyl^n A$ $\forall n\in\mathbb{N}$ follows from the definition of $A$-fibration.

(b) $\Rightarrow$ (a) We will prove first that $p$ is an $A$-fibration. Let $b_0$ be the base point of $B$, let $n\in\N$ and let $f:\cn\Sigma^{n-1}A\to E$ and $H:I\cn\Sigma^{n-1}A\to B$ be continuous maps such that $Hi_0=pf$.
\begin{displaymath}
\xymatrix{\cn\Sigma^{n-1}A \ar[r]^(.63){f} \ar[d]_{i_0} & E \ar[d]^p \\ \cyl\cn\Sigma^{n-1}A
  \ar[r]_(.65){H} & B}
\end{displaymath}

Let $G:\cyl(\cyl\Sigma^{n-1}A) \to B$ be defined by
\begin{displaymath}
G(a,s,t)=\left\{ \begin{array}{ll}
H([[a,\frac{s}{1-t}],2t]) & \textnormal{if $a\in\Sigma^{n-1}A$, $0\leq s \leq 1$ and $0\leq t \leq \min\{\frac{1}{2},1-s\}$} \\ 
H([[a,\frac{s}{t}],2(1-t)]) & \textnormal{if $a\in\Sigma^{n-1}A$, $0\leq s \leq 1$ and $\max\{\frac{1}{2},s\}\leq t \leq 1$} \\ 
H([[a,1],2(1-s)]) & \textnormal{if $a\in\Sigma^{n-1}A$, $0\leq s \leq 1$ and $1-s \leq t \leq s $} 
\end{array} \right.
\end{displaymath}
It is easy to check that $G$ is well-defined and continuous. Moreover, since $G(a,0,t)=b_0$ for all $a\in\Sigma^{n-1}A$ and $t\in [0,1]$, then there exists a map $\overline{G}:\cyl\cn\Sigma^{n-1}A \to B$ such that $\overline{G}q=G$, where $q:\cyl(\cyl\Sigma^{n-1}A) \to \cyl\cn\Sigma^{n-1}A$ is the quotient map.

Let $D=i_0(\cn \Sigma^{n-1}A) \cup \cyl i(\Sigma^{n-1}A) \cup i_1(\cn \Sigma^{n-1}A) \subseteq \cyl \cn \Sigma^{n-1}A$. Let $F:D\to B$ be defined by
\begin{displaymath}
F(a,s,t)=\left\{ \begin{array}{ll}
f([a,s]) & \textnormal{if $a\in\Sigma^{n-1}A$, $0\leq s \leq 1$ and $t\in \{0,1\}$ } \\ 
f([a,1]) & \textnormal{if $a\in\Sigma^{n-1}A$, $s=1$ and $0\leq t \leq 1$ }
\end{array} \right.
\end{displaymath}

It is easy to check that $F$ is well-defined and continuous. Moreover, there is a commutative diagram
\begin{displaymath}
\xymatrix{D \ar[r]^(.5){F} \ar[d]_{\incl} & E \ar[d]^p \\ \cyl\cn\Sigma^{n-1}A
  \ar[r]_(.65){\overline{G}} & B}
\end{displaymath}

By hypothesis and applying item (b) of \ref{lemma_homeo_pairs_1} we obtain that there exists a map $K:\cyl\cn\Sigma^{n-1}A \to E$ such that $K\incl=F$ and $pK=\overline{G}$.

Let $k:\cyl\cn\Sigma^{n-1}A \to E$ be defined by $$k([[a,s],t])=K([[a,s(1-\tfrac{t}{2})],\tfrac{t}{2}]).$$

It is easy to check that $ki_0=f$ and $pk=H$.

The homotopy lifting properties with respect to $A$ and $\cyl^nA$, $n\in\N$, follow from the hypothesis.

Thus, $p$ is an $A$-fibration.

We will prove now that $p$ is an $A$-weak equivalence. We will prove first that $p$ induces isomorphisms $p_\ast:\pi^A_n(E)\to\pi^A_n(B)$ for $n\geq 1$.

Let $n\in\N$ and let $f,g:\Sigma^{n}A\rightarrow E$ be continuous maps such that $pf\simeq pg$. Let 
$H:\cyl\Sigma^{n}A\rightarrow B$ be the homotopy.

Let $q:\cn\Sigma^{n-1}A\rightarrow \cn\Sigma^{n-1}A/\Sigma^{n-1}A\cong \Sigma^n A$ be the quotient map.
Hence $fq,gq:\cn\Sigma^{n-1} A\rightarrow E$ and $$H\circ \cyl q:\cyl \cn\Sigma^{n-1} A \rightarrow B$$
is a homotopy between $pfq$ and $pgq$ relative to $\Sigma^{n-1}A$.

We consider in $\cyl\cn\Sigma^{n-1} A$ the following equivalence relation: 
$$(x,t)\sim(x',t') \Leftrightarrow x\in \Sigma^{n-1} A \textnormal{ and } x=x'$$

By \ref{lemma_homeo_pairs_2} there exists a homeomorphism $\cyl\cn\susp^{n-1} A /\!\sim\ \cong \cn\Sigma^{n}A$ which takes the classes of the elements of the bottom and the top of the cylinder to $\Sigma_-\Sigma^{n-1}A$ and $\Sigma_+\Sigma^{n-1}A$ respectively. Let $q':\cyl\cn\Sigma^{n-1} A \to \cyl\cn\Sigma^{n-1}A/\sim\ \cong \cn\Sigma^{n}A$ be the quotient map. Since the homotopy $H\circ \cyl q$ is relative to $\Sigma^{n-1}A$, there exists a continuous map $H':\cn\Sigma^{n}A\rightarrow B$ such that $H'q'=H\circ \cyl q$.

Note that $H'|_{\susp^n A}$ equals $pfq$ in the southern hemisphere $\Sigma_-\Sigma^{n-1}A\cong \cn\Sigma^{n-1}A$ and $pgq$ in the northern hemisphere $\Sigma_+\Sigma^{n-1}A\cong \cn\Sigma^{n-1}A$. Hence, there is a commutative diagram
\begin{displaymath}
\xymatrix@C=30pt{\Sigma^n A=\Sigma^n_- A\cup \Sigma^n_+ A \ar[r]^(.72){fq\cup gq}
  \ar[d]_\incl & E \ar[d]^p \\ \cn\Sigma^{n}A \ar[r]_(.58){H'} & B}
\end{displaymath}

By hypothesis there exists a continuous map $K:\cn\Sigma^{n}A\rightarrow E$ such that $pK=H'$ and $K|_{\Sigma^nA}=fq\cup gq$.  Then
$Kq':\cyl\cn\Sigma^{n-1}A\rightarrow E$, $Kq'i_0=fq$, $Kq'i_1=gq$. Moreover,
$Kq'$ is a homotopy relative to $\Sigma^{n-1}A$. Hence, if $s_0$ is the base point of $\cn\Sigma^{n-1}A/\Sigma^{n-1}A\cong \Sigma^n A$ and $e_0$ is the base point of $E$, then for $s\in\Sigma^{n-1}A$ and $t\in I$ we have that 
$Kq'(s,t)=K\,\incl(s,\frac{1}{2})=fq(s)=f(s_0)=e_0$.

Then there exists a continuous map $K'':\cyl\Sigma^nA\rightarrow E$ such that $K''\circ\cyl q=Kq'$. Hence, $fq(x)=Kq'i_0(x)=K''(q\times\id_I)i_0(x)=K''(q(x),0)$. Since $q$ is surjective, $K''i_0=f$. In a similar way $K''i_1=g$, and hence $K''$ is a homotopy between $f$ and $g$. Thus, $p_*$ is injective.

We will prove now that $p_*$ is surjective. Let $[f]\in\pi_n^A(B)$. There is a commutative diagram
\begin{displaymath}
\xymatrix{\Sigma^{n-1}A \ar[r]^(.6){c_{e_0}} \ar[d]_\incl & E \ar[d]^p \\ \cn\Sigma^{n-1}A
  \ar[r]_(.62){fq} & B}
\end{displaymath}
where $c_{e_0}$ is the constant map $e_0$. By hypothesis there exists $g:\cn\Sigma^{n-1} A\rightarrow E$ such that $pg=fq$
and $g|_{\Sigma^{n-1}A}=c_{e_0}$. Then, there exists $g':\Sigma^n A\rightarrow E$ such that $g'q=g$. Hence, $pg'q=pg=fq$ and since $q$ is surjective we obtain that $pg'=f$. Thus, $p_*$ is surjective.

It remains to prove that $p_\ast:\pi^A_0(E)\to\pi^A_0(B)$ is an isomorphism. Note that $\pi^A_0(E)$ and $\pi^A_0(B)$ are groups since $A$ is an $H$-cogroup. This will be used to prove that $p_\ast$ is a monomorphism as we will show that $pf\simeq \ast$ implies $f\simeq\ast$.

Since $p$ has the RLP with respect to $\ast\to A$, it follows that $p_\ast$ is an epimorphism. Suppose now that $f:A\to E$ is a map such that $pf\simeq \ast$. Hence, there exists a map $g:\cn A\to B$ such that $g|_A=pf$.
\begin{displaymath}
\xymatrix{A \ar[r]^{f} \ar[d]_\incl & E \ar[d]^p \\ \cn A
  \ar[r]_(.53){g} & B}
\end{displaymath}
By hypothesis there exists a map $g':\cn A\to E$ such that $pg'=g$ and $g'|A=f$. Hence, $f$ can be extended to $\cn A$ and then $f\simeq \ast$.

(b) $\Rightarrow$ (c) follows by standard arguments.
\end{proof}

From the previous proposition we obtain some interesting corollaries.

\begin{coro} \label{coro_I-inj=J-inj_WE}
If $A$ is an $H$-cogroup then $\mathcal{I}_A\textnormal{-inj}=\mathcal{J}_A\textnormal{-inj}\cap \textnormal{WE}$.
\end{coro}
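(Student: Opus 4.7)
The proof will be essentially a bookkeeping exercise: the collection of maps listed in condition (b) of Proposition \ref{propfibyed} was chosen to coincide exactly with the generating set $\mathcal{I}_A$, so the claim reduces to reading off that proposition.

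The plan is as follows. First, I will unwind the definition of $\mathcal{I}_A\textnormal{-inj}$: a map $p$ lies in $\mathcal{I}_A\textnormal{-inj}$ if and only if it has the RLP with respect to each of the three families of maps
\[
\Sigma^{n-1}A \hookrightarrow \cn\Sigma^{n-1}A \ (n \in \N), \qquad i_0 \colon \cyl^{n-1}A \to \cyl^n A \ (n \in \N), \qquad \ast \to A,
\]
which is precisely condition (b) of Proposition \ref{propfibyed}. Since $A$ is assumed to be an $H$-cogroup, that proposition applies and gives the equivalence of (a) and (b), so $p \in \mathcal{I}_A\textnormal{-inj}$ if and only if $p \in \textnormal{Fib} \cap \textnormal{WE}$.

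Second, I will invoke the observation made right after the definition of $A$-fibration, namely that $\textnormal{Fib} = \mathcal{J}_A\textnormal{-inj}$. Combining this with the previous step yields
\[
\mathcal{I}_A\textnormal{-inj} \;=\; \textnormal{Fib} \cap \textnormal{WE} \;=\; \mathcal{J}_A\textnormal{-inj} \cap \textnormal{WE},
\]
which is exactly the statement to be proved.

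There is no real obstacle here: every nontrivial content has already been placed in Proposition \ref{propfibyed}, and this corollary is simply the translation of that proposition into the language of $\mathcal{I}$-injectives. The only thing to double-check is that the three families listed in (b) of the proposition match the three families defining $\mathcal{I}_A$ verbatim, which they do.
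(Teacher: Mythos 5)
Your proof is correct and is exactly the argument the paper intends: the corollary is stated without proof as an immediate consequence of Proposition \ref{propfibyed}, since $\mathcal{I}_A\textnormal{-inj}$ is by definition the class of maps satisfying condition (b) of that proposition and $\textnormal{Fib}=\mathcal{J}_A\textnormal{-inj}$ is noted right after the definition of $A$-fibration. Nothing is missing.
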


\begin{coro}
If $A$ is an $H$-cogroup then every generalized CW($A$)-complex is an $A$-cofibrant space.
\end{coro}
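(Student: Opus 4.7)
The plan is to deduce the corollary as an immediate specialization of part (c) of Proposition \ref{propfibyed}. Unfolding the definitions, $X$ is $A$-cofibrant precisely when the map $\ast\to X$ has the left lifting property with respect to every member of $\textnormal{Fib}\cap\textnormal{WE}$, so what I need to produce are lifts in commutative squares whose left edge is $\ast\to X$ and whose right edge is an arbitrary map in $\textnormal{Fib}\cap\textnormal{WE}$.

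My first step would be to identify the generalized CW($A$)-complex $X$ with a generalized relative CW($A$)-complex by taking $B=\ast$ in the corresponding definition: the filtration $\ast = X^0 \to X^1 \to X^2 \to \cdots$ which exhibits $X$ as a generalized CW($A$)-complex is, tautologically, a filtration exhibiting $(X,\ast)$ as a generalized relative CW($A$)-complex.

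Since $A$ is an $H$-cogroup, Proposition \ref{propfibyed} applies, and its implication (a)$\Rightarrow$(c) then yields that every $p\in\textnormal{Fib}\cap\textnormal{WE}$ has the RLP with respect to the inclusion $\ast\hookrightarrow X$. This is exactly the required LLP of $\ast\to X$ against $\textnormal{Fib}\cap\textnormal{WE}$, and hence $X$ is $A$-cofibrant.

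There is essentially no obstacle once Proposition \ref{propfibyed} is in place; the statement is a straight translation of part (c) into the language of cofibrant objects. An alternative route would pass through Corollary \ref{coro_I-inj=J-inj_WE}, which identifies $\textnormal{Fib}\cap\textnormal{WE}$ with $\mathcal{I}_A\textnormal{-inj}$ and hence $\textnormal{Cof}$ with $\mathcal{I}_A\textnormal{-cof}$, and would then invoke the standard closure of $\mathcal{I}_A\textnormal{-cof}$ under coproducts, pushouts, and transfinite compositions to see that the iterated cell attachments building a generalized CW($A$)-complex assemble $\ast\to X$ into an $\mathcal{I}_A$-cofibration; the only nontrivial bookkeeping there is to match the two flavours of attachment (the $A$-$0$-cells and the higher $A$-cells) with the two relevant families of generators $\ast\to A$ and $\Sigma^{n-1}A\hookrightarrow\cn\Sigma^{n-1}A$ in $\mathcal{I}_A$. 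The first route is shorter and reuses work already done, so it is the one I would follow.
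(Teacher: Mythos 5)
Your proof is correct and follows the route the paper intends: the corollary is stated as an immediate consequence of Proposition \ref{propfibyed}, obtained exactly as you do by viewing a generalized CW($A$)-complex $X$ as the generalized relative CW($A$)-complex $(X,\ast)$ and applying the implication (a)$\Rightarrow$(c) to see that $\ast\to X$ has the LLP with respect to $\textnormal{Fib}\cap\textnormal{WE}$. Nothing further is needed.
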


\begin{coro}
If $A$ is an $H$-cogroup then the inclusion maps $\ast \to A$, $\Sigma^{n-1}A\hookrightarrow \cn\Sigma^{n-1}A$, $n\in\mathbb{N}$, and $i_0: \cyl^{n-1} A\to \cyl^n A$, $n\in\mathbb{N}$, are $A$-cofibrations.
\end{coro}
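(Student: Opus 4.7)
The plan is to observe that this corollary is essentially a formal consequence of the preceding Corollary \ref{coro_I-inj=J-inj_WE}, combined with the definitions of $\mathcal{I}_A$-injectives, $A$-fibrations and $A$-cofibrations. The maps listed in the statement are precisely the elements of the set $\mathcal{I}_A$, so the task reduces to showing that every element of $\mathcal{I}_A$ lies in the class $\textnormal{Cof}$.

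First I would recall that by definition $\textnormal{Fib}=\mathcal{J}_A\textnormal{-inj}$, so Corollary \ref{coro_I-inj=J-inj_WE} can be rewritten as
\begin{equation*}
\mathcal{I}_A\textnormal{-inj}=\textnormal{Fib}\cap\textnormal{WE}.
\end{equation*}
Taking left lifting classes on both sides yields
\begin{equation*}
\mathcal{I}_A\textnormal{-cof}=\textnormal{LLP}(\mathcal{I}_A\textnormal{-inj})=\textnormal{LLP}(\textnormal{Fib}\cap\textnormal{WE})=\textnormal{Cof}.
\end{equation*}
Hence it suffices to prove that every map in $\mathcal{I}_A$ belongs to $\mathcal{I}_A\textnormal{-cof}$.

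But this last inclusion is purely formal: by the very definition of $\mathcal{I}_A\textnormal{-inj}$, any map $p\in\mathcal{I}_A\textnormal{-inj}$ has the RLP with respect to every element of $\mathcal{I}_A$; equivalently, every element of $\mathcal{I}_A$ has the LLP with respect to every map in $\mathcal{I}_A\textnormal{-inj}$, which is precisely the statement that $\mathcal{I}_A\subseteq\mathcal{I}_A\textnormal{-cof}$. Combining this with the identification $\mathcal{I}_A\textnormal{-cof}=\textnormal{Cof}$ finishes the argument.

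There is no real obstacle here: the entire content of the corollary was carried by Proposition \ref{propfibyed} (whose nontrivial part was identifying $\mathcal{I}_A$-injectives with trivial $A$-fibrations when $A$ is an $H$-cogroup), and the present statement is a one-line formal unwinding of the definitions. I would therefore write the proof as a short remark citing Corollary \ref{coro_I-inj=J-inj_WE} and the tautology $\mathcal{I}_A\subseteq\mathcal{I}_A\textnormal{-cof}$.
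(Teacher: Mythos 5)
Your proof is correct and matches the paper's (implicit) argument: the corollary is stated as an immediate consequence of Proposition \ref{propfibyed}, whose equivalence (a)$\Leftrightarrow$(b) identifies $\textnormal{Fib}\cap\textnormal{WE}$ with $\mathcal{I}_A\textnormal{-inj}$, so each map in $\mathcal{I}_A$ tautologically has the LLP against it and is therefore an $A$-cofibration. This is exactly the formal unwinding you describe.
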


\begin{rem} \label{rem_case_S0}
Proposition \ref{propfibyed} also holds if $A=S^0$ and $E$ is path-connected. Note that the hypothesis of $A$ being an $H$-cogroup is only used to prove injectivity of the map $p_\ast:\pi^A_0(E)\to \pi^A_0(B)$ which follows trivially if $A=S^0$ and $E$ is path-connected.

However, this does not hold if the space $E$ is not path-connected as the following example shows. Let $E=\{a,b,c\}$ with the discrete topology and with base point $a$, let $B=S^0$ and let $p:E\to B$ be defined by $p(a)=1$, $p(b)=-1$, $p(c)=-1$. It is easy to verify that $p$ satisfies part (b) of proposition \ref{propfibyed} but it does not satisfy part (a) since it does not induce an isomorphism $p_\ast:\pi^{S^0}_0(E)\to \pi^{S^0}_0(B)$.
\end{rem}

If $X$ is a topological space, $X^I$ will denote the space of continuous maps from $I$ to $X$ which do not necessarily preserve base points. We give $X^I$ the compact-open topology. If $x_0$ is the base point of $X$, the constant map from $I$ to $X$ with value $x_0$ will be the base point of $X^I$. We define the maps $\ev_0^X,\ev_1^X:X^I\to X$ by $\ev_0^X(\alpha)=\alpha(0)$ and $\ev_1^X(\alpha)=\alpha(1)$. Also, if $f:X\to Y$ is a continuous map, we define $f^I:X^I\to Y^I$ by $f^I(\alpha)=f\circ \alpha$.

If $f:\cyl X\to Y$ is a continuous map, $f^\sharp$ will denote the continuous map from $X$ to $Y^I$ defined by the exponential law. Also, if $g:X\to Y^I$ is a continuous map, $g^\flat$ will denote the continuous map from $\cyl X$ to $Y$ defined by the exponential law.

\begin{lemma} \label{lemma_trivial_A-fib}
Let $X$ and $Y$ be topological spaces, let $p:X\rightarrow Y$ be an $A$-fibration and let $X\underset{Y}{\times}Y^I$ be the pullback
\begin{displaymath}
\xymatrix{X\underset{Y}{\times}Y^I \ar[d]_{\pr_2} \ar[r]^(.55){\pr_1}
   \ar@{}[dr]|{pull} & X \ar[d]^{p} \\ Y^I \ar[r]_{\ev_0^Y} & Y }
\end{displaymath}
If $A$ is an H-cogroup, the map $(\ev_0^X,p^I):X^I\rightarrow X\underset{Y}{\times}Y^I$ is a trivial $A$-fibration.
\end{lemma}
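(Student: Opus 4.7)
The strategy is to show directly that $(\ev_0^X,p^I)\in\mathcal{I}_A\textnormal{-inj}$; since $A$ is an H-cogroup, Corollary \ref{coro_I-inj=J-inj_WE} then identifies this class with $\mathcal{J}_A\textnormal{-inj}\cap\textnormal{WE}$, which is exactly the class of trivial $A$-fibrations. So it suffices to verify the right lifting property of $(\ev_0^X,p^I)$ against each generator in $\mathcal{I}_A$, namely $\ast\to A$, $\Sigma^{n-1}A\hookrightarrow\cn\Sigma^{n-1}A$ and $i_0:\cyl^{n-1}A\to\cyl^nA$.

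The key tool is the adjunction $\cyl\dashv(-)^I$ in the pointed category. Given a map $j:B\to B'$, a commutative square
\begin{displaymath}
\xymatrix{B \ar[r]^{u} \ar[d]_{j} & X^I \ar[d]^{(\ev_0^X,p^I)} \\ B' \ar[r]_(.45){(v_1,v_2)} & X\underset{Y}{\times}Y^I}
\end{displaymath}
transports under the adjunction to a commutative square
\begin{displaymath}
\xymatrix{P_j \ar[r]^{u^\flat\cup v_1} \ar[d]_{k_j} & X \ar[d]^{p} \\ \cyl B' \ar[r]_{v_2^\flat} & Y}
\end{displaymath}
where $P_j=\cyl B\underset{B}{\cup}B'$ is the pushout of $i_0^B:B\to\cyl B$ along $j$, and $k_j:P_j\to\cyl B'$ is the natural map induced by $\cyl j$ and $i_0^{B'}$ (which agree on $B$ since $\cyl j\circ i_0^B=i_0^{B'}\circ j$). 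A filler of the second square adjoints back to a filler of the first, so it is enough to show that $k_j\in\mathcal{J}_A$ for every $j\in\mathcal{I}_A$, since $p\in\textnormal{Fib}=\mathcal{J}_A\textnormal{-inj}$.

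I verify this case by case using Lemma \ref{lemma_homeo_pairs_1}. For $j:\ast\to A$, the pushout $P_j$ equals $A$ and $k_j=i_0:A\to\cyl A\in\mathcal{J}_A$. For $j:\Sigma^{n-1}A\hookrightarrow\cn\Sigma^{n-1}A$, the canonical comparison identifies $P_j$ with the subspace $i_0(\cn\Sigma^{n-1}A)\cup\cyl\Sigma^{n-1}A$ of $\cyl\cn\Sigma^{n-1}A$, and item (a) of Lemma \ref{lemma_homeo_pairs_1} with $B=\Sigma^{n-1}A$ yields a pair homeomorphism showing that $k_j$ is isomorphic, as an arrow, to $i_0:\cn\Sigma^{n-1}A\to\cyl\cn\Sigma^{n-1}A$, which lies in $\mathcal{J}_A$. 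For $j=i_0:\cyl^{n-1}A\to\cyl^nA$ (so $B=\cyl^{n-1}A$, $B'=\cyl B$), $P_j$ identifies with $\cyl(i_0(B))\cup i_0(\cyl B)\subseteq\cyl\cyl B$, and item (c) of the same lemma identifies $k_j$ with $i_0:\cyl^nA\to\cyl^{n+1}A$, again in $\mathcal{J}_A$.

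The main technical point is to confirm that the canonical map from the pushout $P_j$ onto its image in $\cyl B'$ is a homeomorphism in each of the three cases, and then to match that image with the first coordinate of the pair in Lemma \ref{lemma_homeo_pairs_1}; once this bookkeeping is in place, everything else is formal adjunction calculus, and Corollary \ref{coro_I-inj=J-inj_WE} delivers the conclusion.
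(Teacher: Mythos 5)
Your proof is correct and follows essentially the same route as the paper's: in each of the three cases the lifting problem against $(\ev_0^X,p^I)$ is transposed via the exponential law to a lifting problem for $p$ against the corner map of the pushout, which is then identified with a generating trivial cofibration using items (a) and (c) of Lemma \ref{lemma_homeo_pairs_1}, and the conclusion is drawn from Proposition \ref{propfibyed} (equivalently, Corollary \ref{coro_I-inj=J-inj_WE}). Your presentation merely packages the paper's three case-by-case computations uniformly as the statement that $k_j\in\mathcal{J}_A$ up to isomorphism for every $j\in\mathcal{I}_A$.
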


\begin{proof}
Suppose that there is a commutative diagram
\begin{displaymath}
\xymatrix{ \Sigma^{n-1}A \ar[r]^f \ar[d]_{\incl} & X^I \ar[d]^{(\ev_0^X,p^I)} \\
  \cn\Sigma^{n-1}A \ar[r]_{(g_1,g_2)} &  X\underset{Y}{\times}Y^I  }
\end{displaymath}
with $n\in\N$. Then, there is a commutative diagram
\begin{displaymath}
\xymatrix@C=35pt{\cyl(\Sigma^{n-1}A)\cup i_0(\cn\Sigma^{n-1}A)
  \ar[r]^(.75){f^\flat \cup g_1} \ar[d]_{\incl} & X \ar[d]^{p} \\
  \cyl(\cn\Sigma^{n-1}A) \ar[r]_(.6){g_2^\flat} &  Y  }
\end{displaymath}
Since $p$ is an $A$-fibration, by item (a) of \ref{lemma_homeo_pairs_1} there exists a map $H':\cyl\cn\Sigma^{n-1}A\rightarrow X$ such that
$pH'=g_2^\flat$ and $H'\incl=f^\flat\cup g_1$. Thus, by the exponential law we obtain a continuous map $H:\cn\Sigma^{n-1}A \rightarrow X^I$ and
$H$ is the desired lift.

Suppose now that there is a commutative diagram
\begin{displaymath}
\xymatrix{ \cyl^{n-1}A \ar[r]^(.55){f} \ar[d]_{i_0} & X^I \ar[d]^{(\ev_0^X,p^I)} \\
  \cyl^{n}A \ar[r]_(0.42){(g_1,g_2)} &  X\underset{Y}{\times}Y^I  }
\end{displaymath}
with $n\in\N$. Applying the exponential law we obtain a commutative diagram
\begin{displaymath}
\xymatrix{ \cyl(i_0(\cyl^{n-1}A)) \cup i_0(\cyl(\cyl^{n-1}A)) \ar[r]^(0.8){f^\flat\cup g_1} \ar[d]_{\incl} & X \ar[d]^{p} \\
  \cyl\,\cyl^{n}A \ar[r]_(.53){g_2^\flat} &  Y  }
\end{displaymath}
Since $p$ is an $A$-fibration, by item (c) of lemma \ref{lemma_homeo_pairs_1} there exists a map $H':\cyl\,\cyl^{n}A\to X$ such that $H'\incl=f^\flat\cup g_1$ and $pH'=g_2^\flat$. Now, from $H'$ and the exponential law we get the desired lift $H:\cyl^{n}A\to X^I$.

The right lifting property with respect to $\ast\to A$ can be proved in a similar way.

Thus, by \ref{propfibyed}, the map $(\ev_0^X,p^I)$ is a trivial $A$-fibration.
\end{proof}

\begin{definition}
Let $p:X\to Y$ be a continuous map between pointed topological spaces. We say that $p$ is a pointed Hurewicz fibration if $p$ is a pointed map which has the homotopy lifting property (in the category of pointed topological spaces) with respect to any pointed space.
\end{definition}

If $Z$ is a topological space we define the map $s_Z:Z\to Z^I$ by $s_Z(z)(t)=z$ for $z\in Z$ and $t\in I$.

\begin{prop} \label{prop_trivial_cofibs}
Let $B$ and $Z$ be topological spaces and let $i:B\rightarrow Z$ be a continuous map. If $A$ is an H-cogroup, the following are equivalent:
\begin{enumerate}
\item[(a)] $i \in \textnormal{Cof} \cap \textnormal{WE}$.
\item[(b)] $i$ has the LLP with respect to $\textnormal{Fib}$.
\item[(c)] $i \in \textnormal{Cof}$ and $i$ is a strong deformation retract.
\end{enumerate}
\end{prop}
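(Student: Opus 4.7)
The plan is to prove the cycle $(c)\Rightarrow (b)\Rightarrow (a)\Rightarrow (c)$; the last two implications rest on the small-object-argument factorization $i=q\circ k$ with $k\in\mathcal{J}_A\textnormal{-cof}$ and $q\in\mathcal{J}_A\textnormal{-inj}=\textnormal{Fib}$.

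For $(c)\Rightarrow (b)$, I would convert any lifting problem for $i$ against an $A$-fibration $p\colon X\to Y$ with data $f\colon B\to X$ and $g\colon Z\to Y$ into a lifting problem against the trivial $A$-fibration $(\ev_0^X,p^I)\colon X^I\to X\underset{Y}{\times}Y^I$ provided by Lemma \ref{lemma_trivial_A-fib}. If $r\colon Z\to B$ and $H\colon\cyl Z\to Z$ witness the strong deformation retract structure (so $Hi_0=\id_Z$, $Hi_1=ir$ and $H\circ\cyl i=i\circ\pr_B$) and $\bar H$ denotes the reversed homotopy, then the square with top arrow $s_X\circ f$ and bottom arrow $(f\circ r,\,(g\bar H)^\sharp)$ commutes (because $H$ is relative to $B$), and the lift $\beta\colon Z\to X^I$ supplied by $i\in\textnormal{Cof}$ yields $\ell=\ev_1^X\circ\beta$: the identity $\beta\circ i=s_X\circ f$ gives $\ell\circ i=f$, and evaluating $p^I\beta=(g\bar H)^\sharp$ at $1$ gives $p\circ\ell=g$.

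For $(b)\Rightarrow (a)$ the inclusion $\textnormal{Fib}\cap\textnormal{WE}\subseteq\textnormal{Fib}$ gives $i\in\textnormal{Cof}$ at once; applying the LLP of $i$ against the fibration $q$ in the factorization produces a retraction showing that $i$ is a retract of $k$ in the arrow category, and since $k$ lies in $\textnormal{WE}$ (see below) and $\textnormal{WE}$ is closed under retracts, $i\in\textnormal{WE}$. For $(a)\Rightarrow (c)$, two-out-of-three applied to $i=q k$ with $i,k\in\textnormal{WE}$ gives $q\in\textnormal{Fib}\cap\textnormal{WE}$, so the LLP of $i\in\textnormal{Cof}$ against $q$ produces $\tilde r\colon Z\to W$ with $\tilde r i=k$ and $q\tilde r=\id_Z$. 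Writing $(r_W,H_W)$ for the strong deformation retract data of $k$ and setting $r=r_W\circ\tilde r$, $\mathcal{H}=q\circ H_W\circ\cyl\tilde r$, direct substitution using $q k=i$ and $r_W k=\id_B$ yields $ri=\id_B$, $\mathcal{H}i_0=\id_Z$, $\mathcal{H}i_1=ir$ and $\mathcal{H}\circ\cyl i=i\circ\pr_B$.

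The principal hurdle, common to both halves of the bottom of the chain, is verifying that every $k\in\mathcal{J}_A\textnormal{-cof}$ is a strong deformation retract inclusion: each generator in $\mathcal{J}_A$ is a cylinder bottom-inclusion and visibly an SDR inclusion, and one then has to check that this property is preserved under pushouts, coproducts, transfinite composition and retracts, which is standard Hurewicz-cofibration bookkeeping in the pointed category. Once that is granted, both remaining implications are routine.
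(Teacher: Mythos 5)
Your implication (c)$\Rightarrow$(b) is essentially the paper's own argument (Lemma \ref{lemma_trivial_A-fib} plus the $\textnormal{Cof}$ hypothesis), but the other two implications have a genuine gap: you base them on the small-object-argument factorization $i=q\circ k$ with $k$ a relative $\mathcal{J}_A$-cell complex and $q\in\mathcal{J}_A\textnormal{-inj}$, and that factorization is not available under the stated hypotheses. The proposition assumes only that $A$ is an $H$-cogroup; the smallness of the domains of $\mathcal{J}_A$ (hence the right to run the small object argument) is established only later, in the proof of Theorem \ref{theo_model_category}, and it uses the additional assumptions that $A$ is compact and T$_1$. For a general $H$-cogroup $A$ the maps in $\mathcal{J}_A$-cell need not be closed T$_1$ inclusions and the domains $\cyl^{n-1}A$, $\cn\susp^{n-1}A$ need not be small relative to them, so your factorization may simply not exist. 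On top of this, your ``principal hurdle'' --- that every relative $\mathcal{J}_A$-cell complex is a strong deformation retract inclusion --- is a substantial verification (closure of pointed SDR Hurewicz cofibrations under pushout, transfinite composition and retract) that you defer rather than carry out, and it is precisely the kind of statement the paper extracts as a \emph{consequence} of this proposition (Corollary \ref{coro_J-cof=I-cof_WE}) rather than as an input to it.

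The fix is to replace the small-object factorization by explicit path-space constructions, which is what the paper does. For (a)$\Rightarrow$(c), factor $i$ through the mapping path space as $i=p\circ j$ with $j\colon B\to B\underset{Z}{\times}Z^I$ the standard strong deformation retract inclusion and $p$ the standard Hurewicz fibration; $p$ is a pointed Hurewicz fibration, hence lies in $\textnormal{Fib}$, two-out-of-three puts it in $\textnormal{Fib}\cap\textnormal{WE}$, and lifting $i\in\textnormal{Cof}$ against it exhibits $i$ as a retract of $j$, hence as a strong deformation retract. For (b)$\Rightarrow$(a) it is cleaner to prove (b)$\Rightarrow$(c) directly (then (c)$\Rightarrow$(a) is immediate): lift $i$ against the pointed Hurewicz fibration $B\to\ast$ to get a retraction $r$ with $ri=\id_B$, and against the pointed Hurewicz fibration $(\ev_0^Z,\ev_1^Z)\colon Z^I\to Z\times Z$ (with top arrow $s_Z\circ i$ and bottom arrow $(ir,\id_Z)$) to get a homotopy $ir\simeq\id_Z$ rel $i(B)$. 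This needs no factorization machinery and no smallness hypotheses at all.
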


\begin{proof}
(a)$\Rightarrow$(c) We consider the usual factorization $i=pj$ where $p$ is a Hurewicz fibration and $j$ is a strong deformation retract. It is easy to verify that $p$ is a pointed Hurewicz fibration and hence $p\in\textnormal{Fib}$. On the other hand, $j$ is an $A$-weak equivalence, and since $i$ is also an $A$-weak equivalence it follows that $p\in\textnormal{WE}$. Hence, $p\in \textnormal{Fib}\cap \textnormal{WE}$. Since $i\in\mathrm{Cof}$, there exists a lift $u$ such that the following diagram commutes.
\begin{displaymath}
\xymatrix{B \ar[r]^(0.4){j} \ar[d]_i & B\underset{Z}{\times}Z^I \ar[d]^{p} \\
  Z \ar[r]_{\id_Z} \ar[ru]^u & Z}
\end{displaymath}
Thus, $i$ is a retract of $j$ and it follows that $i$ is also a strong deformation retract.

(c)$\Rightarrow$(a) Follows immediately.

(b)$\Rightarrow$(c) Clearly $i\in\mathrm{Cof}$. Also, since the map $B\to \ast$ is a pointed Hurewicz fibration, it is an $A$-fibration. Hence, there exists a map $r$ such that the following diagram commutes:
\begin{displaymath}
\xymatrix{B \ar[d]_{i} \ar[r]^{\id_B} & B \ar[d] \\ Z \ar[r]
  \ar@{.>}[ru]^r & \ast}
\end{displaymath}
Then $ri=\id_B$.

Also, it is easy to prove that the map $(\ev_0^Z,\ev_1^Z):Z^I\rightarrow Z\times Z$ is a pointed Hurewicz fibration. Hence it is an $A$-fibration.

By hypothesis, there exists a map $H$ such that the following diagram commutes:
\begin{displaymath}
\xymatrix{B \ar[d]_{i} \ar[r]^(0.45){s_Z\circ i} & Z^I \ar[d]^{(\ev_0^Z,\ev_1^Z)}\\ Z
  \ar[r]_(0.4){(ir,\id_Z)} \ar@{.>}[ru]^{H} & Z\times Z}
\end{displaymath}
Then, $H^\flat:ir\simeq\id_Z\ \mathrm{rel}(i(B))$. Thus, $i$ is a strong deformation retract.

(c)$\Rightarrow$(b) Let $p:X\rightarrow Y$ be an $A$-fibration and
suppose there is a commutative diagram
\begin{displaymath}
\xymatrix{B \ar[d]_{i} \ar[r]^{\alpha} & X \ar[d]^p \\ Z
  \ar[r]_{\beta} & Y}
\end{displaymath}
Since $i$ is a strong deformation retract, there exist
$r:Z\rightarrow B$ and $h:IZ\rightarrow Z$ such that $ri=\id_B$ and
$h:ir\simeq\id_Z\ \mathrm{rel}(i(B))$. By \ref{lemma_trivial_A-fib},
the map $(\ev_0^X,p^I):X^I\rightarrow X\underset{Y}{\times}Y^I$ is a trivial $A$-fibration and since $i\in\mathrm{Cof}$ and
\begin{eqnarray}
(\ev_0^X,p^I)s_X\alpha & = & (\ev_0^Xs_X\alpha,p^Is_X\alpha)= (\alpha,s_Yp\alpha)=
(\alpha,s_Y\beta i)=(\alpha,\beta^Is^Zi) = \nonumber \\ & = & (\alpha ri,\beta^Ih^\sharp i) = (\alpha r,\beta^Ih^\sharp)i \nonumber
\end{eqnarray}
there exists a continuous map $H$ that such that the following diagram commutes
\begin{displaymath}
\xymatrix@C=30pt{B \ar[d]_{i} \ar[r]^{s_X\alpha} & X^I
  \ar[d]^{(\ev_0^X,p^I)} \\ Z \ar[r]_(0.4){(\alpha r,\beta^Ih^\sharp)} \ar@{.>}[ru]^H
  & X\underset{Y}{\times}Y^I}
\end{displaymath}
Let $u=\ev_1^XH$. Then
$$pu=p\,\ev_1^XH=\ev_1^Yp^IH=\ev_1^Y\beta^Ih^\sharp=\beta \ev_1^Zh^\sharp=\beta h i_1=\beta$$
and
$$ui=\ev_1^XHi=\ev_1^Xs_X\alpha=\alpha.$$
Then, $u$ is the desired lift. Thus $i$ has the LLP with respect to $\textnormal{Fib}$.
\end{proof}

\begin{coro} \label{coro_J-cof=I-cof_WE}
If $A$ is an $H$-cogroup then $\mathcal{J}_A\textnormal{-cof}=\mathcal{I}_A\textnormal{-cof}\cap \textnormal{WE}$.
\end{coro}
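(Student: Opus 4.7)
The plan is to deduce the corollary by stringing together the two preceding results with the bare definitions of \textnormal{Cof} and $\mathcal{I}_A$-cof, $\mathcal{J}_A$-cof; essentially no new content is needed beyond bookkeeping.

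First I would unwind the definition of $\textnormal{Cof}$ and combine it with Corollary~\ref{coro_I-inj=J-inj_WE}. By definition $\textnormal{Cof}$ is the class of maps with LLP with respect to $\textnormal{Fib}\cap\textnormal{WE}$. Corollary~\ref{coro_I-inj=J-inj_WE} identifies $\textnormal{Fib}\cap\textnormal{WE}$ with $\mathcal{I}_A\textnormal{-inj}$ (using that $\textnormal{Fib}=\mathcal{J}_A\textnormal{-inj}$). Therefore $\textnormal{Cof}=\mathcal{I}_A\textnormal{-cof}$, so the right-hand side of the corollary is simply $\textnormal{Cof}\cap\textnormal{WE}$.

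Next I would translate the left-hand side in the same spirit. By definition $\mathcal{J}_A\textnormal{-cof}$ is the class of maps with LLP with respect to $\mathcal{J}_A\textnormal{-inj}=\textnormal{Fib}$. Proposition~\ref{prop_trivial_cofibs} (equivalence of (a) and (b)) says precisely that a map belongs to $\textnormal{Cof}\cap\textnormal{WE}$ if and only if it has the LLP with respect to $\textnormal{Fib}$. Hence $\mathcal{J}_A\textnormal{-cof}=\textnormal{Cof}\cap\textnormal{WE}$.

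Putting both identifications together yields $\mathcal{J}_A\textnormal{-cof}=\textnormal{Cof}\cap\textnormal{WE}=\mathcal{I}_A\textnormal{-cof}\cap\textnormal{WE}$, which is the claimed equality. There is no real obstacle here: the substantive work was already carried out in the proofs of Corollary~\ref{coro_I-inj=J-inj_WE} and Proposition~\ref{prop_trivial_cofibs}; the only hypothesis I must remember to invoke is that $A$ is an $H$-cogroup, which is needed for both of those inputs.
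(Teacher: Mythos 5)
Your argument is correct and is essentially the paper's own proof: both identify $\mathcal{I}_A\textnormal{-cof}$ with $\textnormal{Cof}$ via Corollary~\ref{coro_I-inj=J-inj_WE} and then apply the equivalence (a)$\Leftrightarrow$(b) of Proposition~\ref{prop_trivial_cofibs} to identify $\mathcal{J}_A\textnormal{-cof}$ with $\textnormal{Cof}\cap\textnormal{WE}$. No gaps.
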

\begin{proof}
Note that $\mathcal{J}_A\textnormal{-cof}$ is the class of maps that have the LLP with respect to the class $\textnormal{Fib}$ since $\textnormal{Fib}=\mathcal{J}_A\textnormal{-inj}$. On the other hand, $\mathcal{I}_A\textnormal{-cof}$ is the class of maps that have the LLP with respect to the class $\textnormal{Fib}\cap \textnormal{WE}$ since $\mathcal{I}_A\textnormal{-inj}=\mathcal{J}_A\textnormal{-inj}\cap \textnormal{WE}=\textnormal{Fib}\cap \textnormal{WE}$ by \ref{coro_I-inj=J-inj_WE}. Hence, $\mathcal{I}_A\textnormal{-cof}=\textnormal{Cof}$. The result follows from \ref{prop_trivial_cofibs}.
\end{proof}

\begin{theo} \label{theo_model_category}
Let $A$ be a compact and T$_1$ topological $H$-cogroup. Then the category of topological spaces with $\textnormal{Fib}$, $\textnormal{Cof}$ and $\textnormal{WE}$ as the classes of fibrations, cofibrations and weak equivalences respectively is a cofibrantly generated model category.
\end{theo}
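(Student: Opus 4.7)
The plan is to apply Kan's recognition theorem for cofibrantly generated model categories (as stated, e.g., in Hovey's \emph{Model Categories}, Theorem 2.1.19). Concretely, we must verify: (i) the category of pointed topological spaces is complete and cocomplete; (ii) $\textnormal{WE}$ satisfies the 2-out-of-3 property and is closed under retracts; (iii) the domains of $\mathcal{I}_A$ are small relative to $\mathcal{I}_A$-cell and the domains of $\mathcal{J}_A$ are small relative to $\mathcal{J}_A$-cell; (iv) $\mathcal{J}_A$-cof $\subseteq \mathcal{I}_A$-cof $\cap\,\textnormal{WE}$; and (v) $\mathcal{I}_A$-inj $=\mathcal{J}_A$-inj $\cap\,\textnormal{WE}$. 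Once these are in hand, the theorem declares that $\textnormal{Cof}=\mathcal{I}_A$-cof, so $\mathcal{I}_A$ generates the cofibrations and $\mathcal{J}_A$ generates the trivial cofibrations.

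Items (iv) and (v) are already essentially done: (v) is exactly Corollary \ref{coro_I-inj=J-inj_WE}, and (iv) follows from Corollary \ref{coro_J-cof=I-cof_WE} together with the inclusion $\mathcal{J}_A$-cof $\subseteq \mathcal{I}_A$-cof (since any map in $\mathcal{J}_A$-cof has the LLP with respect to $\mathcal{J}_A$-inj $\supseteq \mathcal{I}_A$-inj). Condition (i) is standard. For condition (ii), the 2-out-of-3 property and retract closure for $\textnormal{WE}$ follow immediately from the fact that $A$-weak equivalences are defined as the class of maps that become isomorphisms after applying each functor $\pi_n^A=[\Sigma^nA,-]$.

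The only nontrivial point is therefore condition (iii), the smallness of the domains. The domains appearing in $\mathcal{I}_A\cup\mathcal{J}_A$ are $\ast$, $A$, and spaces of the form $\Sigma^{n-1}A$, $\cn\Sigma^{n-1}A$ and $\cyl^nA$. Since $A$ is compact and T$_1$, and since these functors are quotients of the products of $A$ with compact Hausdorff spaces (such as $S^{n-1}$, $D^n$, $I^n$ or $I_+^n$) collapsing closed subspaces, each of these domains is again compact. I would then show, by a direct topological argument, that every pushout along a map in $\mathcal{I}_A$ or $\mathcal{J}_A$ is a closed T$_1$ inclusion (the attaching happens along a closed subspace of a compact T$_1$ space, and a pushout of a closed T$_1$ inclusion along any map is again a closed T$_1$ inclusion in the pointed topological setting). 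Consequently, a relative $\mathcal{I}_A$-cell or $\mathcal{J}_A$-cell complex is a transfinite composition of closed T$_1$ inclusions, and the well-known fact that a compact space maps into the transfinite composite of such inclusions through a single stage yields $\omega$-smallness of all the domains.

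The main obstacle is this smallness verification, specifically checking that each step in building an $\mathcal{I}_A$-cell (or $\mathcal{J}_A$-cell) complex produces a closed T$_1$ inclusion so that the standard compactness argument applies. This requires knowing that wedges of the form $\bigvee \Sigma^{n-1}A \hookrightarrow \bigvee \cn\Sigma^{n-1}A$ are closed T$_1$ inclusions (which uses compactness and T$_1$ of $A$, ensuring closed basepoints and well-behaved smash products), and that this property is preserved under pushout and transfinite composition. With smallness in place, Kan's recognition theorem applies and yields the desired cofibrantly generated model category structure, with the cofibrations automatically equal to $\mathcal{I}_A$-cof and hence coinciding with $\textnormal{Cof}$ by the observation used in Corollary \ref{coro_J-cof=I-cof_WE}.
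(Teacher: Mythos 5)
Your proposal is correct and follows essentially the same route as the paper: both apply the recognition theorem for cofibrantly generated model categories (Hirschhorn 11.3.1 / Hovey 2.1.19), dispatch the lifting/injectivity conditions via Corollaries \ref{coro_I-inj=J-inj_WE} and \ref{coro_J-cof=I-cof_WE}, and establish smallness by observing that the domains in $\mathcal{I}_A\cup\mathcal{J}_A$ are compact and the maps are closed T$_1$ inclusions, so that compact spaces are finite relative to such inclusions. The paper simply cites Hovey's Proposition 2.4.2 for the smallness step that you propose to verify by hand.
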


\begin{proof}
Note that by \ref{propfibyed} a map belongs to $\textnormal{Fib}\cap\textnormal{WE}$ if and only if it has the RLP with respect to the class $\mathcal{I}_A$. Also, by definition, a map is a fibration if and only if it has the RLP with respect to the class $\mathcal{J}_A$. Hence, by theorem 11.3.1 of \cite{Hir}, it suffices to prove that
\begin{enumerate}
\item[(0)] $\textnormal{WE}$ is closed under retracts and satisfies the `two out of three' axiom.
\item Both $\mathcal{I}_A$ and $\mathcal{J}_A$ permit the small object argument.
\item $\mathcal{J}_A\textnormal{-cof}\subseteq\mathcal{I}_A\textnormal{-cof}\cap \textnormal{WE}$.
\item $\mathcal{I}_A\textnormal{-inj}\subseteq\mathcal{J}_A\textnormal{-inj}\cap \textnormal{WE}$.
\item $\mathcal{I}_A\textnormal{-cof}\cap \textnormal{WE}\subseteq \mathcal{J}_A\textnormal{-cof}$ or $\mathcal{J}_A\textnormal{-inj}\cap \textnormal{WE} \subseteq \mathcal{I}_A\textnormal{-inj}$.
\end{enumerate}

The proof of (0) is straightforward.

To prove (1) note that the maps in $\mathcal{I}_A$ or $\mathcal{J}_A$ are closed inclusions into T$_1$ spaces since $A$ is a T$_1$ space. Also, the domains of those maps are compact spaces since $A$ is a compact space. Since compact topological spaces are finite relative to closed T$_1$ inclusions (\cite{Hov}, proposition 2.4.2) we obtain that $\mathcal{I}_A$ and $\mathcal{J}_A$ permit the small object argument.

Items (2), (3) and (4) follow from \ref{coro_I-inj=J-inj_WE} and \ref{coro_J-cof=I-cof_WE}.
\end{proof}

\medskip

\begin{rem}
Note that this model category structure is finitely generated. Note also that, with this model category structure, every topological space is fibrant and every generalized CW($A$)-complex is cofibrant.
\end{rem}

\begin{rem} \label{rem_top_S0}
Theorem \ref{theo_model_category} also holds for $A=S^0$ if we work in the category of path-connected topological spaces since proposition \ref{propfibyed} also holds under these hypothesis (cf. remark \ref{rem_case_S0}). More precisely, the category of pointed and path-connected topological spaces with the classes of $S^0$-fibrations, $S^0$-cofibrations and $S^0$-weak equivalences as the classes of fibrations, cofibrations and weak equivalences respectively is a cofibrantly generated model category.

Moreover, this model category structure coincides with the usual model category structure in the category of pointed topological spaces restricted to the full subcategory of path-connected spaces.
\end{rem}

As a consequence of the small object argument (\cite{Hir}, proposition 10.5.16) we obtain the following CW($A$)-approximation theorem.

\begin{theo} \label{theo_CWA-aproximation}
Let $A$ be a compact and T$_1$ topological H-cogroup. Let $X$ be a topological space. Then there exists a generalized CW($A$)-complex $Z$ together with an $A$-weak equivalence $f:Z\to X$.
\end{theo}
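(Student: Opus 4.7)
The plan is to apply the small object argument to the terminal map $\ast\to X$, but for a carefully chosen subset of $\mathcal{I}_A$. Consider
$$\mathcal{I}_A'=\{\Sigma^{n-1}A\hookrightarrow\cn\Sigma^{n-1}A \tq n\in\N\}\cup\{\ast\to A\}\subseteq \mathcal{I}_A,$$
whose pushouts along coproducts are precisely the operation of \emph{attaching $A$-cells} from the definition of generalized CW($A$)-complex. First I would verify that $\mathcal{I}_A'$ permits the small object argument: the domains of its maps are compact (since $A$ is compact) and the maps themselves are closed inclusions into T$_1$ spaces, so by \cite{Hov}, Proposition 2.4.2, the argument permits and in fact terminates at $\omega$, exactly as in the proof of Theorem \ref{theo_model_category}.

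Applying Hirschhorn's Proposition 10.5.16 to $\ast\to X$ with this set then yields a factorization $\ast\xrightarrow{j} Z\xrightarrow{f} X$ with $j\in\mathcal{I}_A'\textnormal{-cof}$ and $f\in\mathcal{I}_A'\textnormal{-inj}$. By construction $Z$ arises as the colimit of a sequence $\ast=Z^0\to Z^1\to Z^2\to\cdots$ in which each transition map is a single pushout of a coproduct of maps in $\mathcal{I}_A'$. This is exactly the inductive procedure of attaching $A$-cells, so $Z$ is a generalized CW($A$)-complex.

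The remaining step is to check that $f$ is an $A$-weak equivalence. For this I would revisit the implication (b)$\Rightarrow$(a) in Proposition \ref{propfibyed}: the verification that the induced maps $p_\ast:\pi^A_n(E)\to\pi^A_n(B)$ are bijective for every $n\geq 0$ uses only the right lifting property with respect to the cone inclusions $\Sigma^{n-1}A\hookrightarrow\cn\Sigma^{n-1}A$ and the map $\ast\to A$, all of which lie in $\mathcal{I}_A'$. Since $f\in\mathcal{I}_A'\textnormal{-inj}$, this suffices.

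The one point that needs care is the choice of generating set. Taking the full $\mathcal{I}_A$ would produce an $A$-cofibrant $Z$ by Theorem \ref{theo_model_category}, but the additional pushouts along the cylinder maps $i_0:\cyl^{n-1}A\to\cyl^nA$ are not of the form permitted by the strict definition of attaching $A$-cells, so the resulting object would not literally be a generalized CW($A$)-complex. Restricting to $\mathcal{I}_A'$ yields an honest generalized CW($A$)-complex structure on $Z$, while $\mathcal{I}_A'$-injectivity alone is still strong enough, via the proof of Proposition \ref{propfibyed}, to force $f$ to be an $A$-weak equivalence.
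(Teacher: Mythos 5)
Your proof is correct, but it takes a genuinely different route from the paper's. The paper runs the small object argument with the full set $\mathcal{I}_A$, obtaining a relative $\mathcal{I}_A$-cell complex $W$ whose construction involves, besides genuine $A$-cell attachments, pushouts along the cylinder inclusions $i_0:\cyl^{k-1}A\to\cyl^{k}A$; it then prunes these extra cells, arguing (by the method of theorem 4.1 of \cite{MO1}) that the pruned space $Z$ is a strong deformation retract of $W$, and composes the resulting inclusion $Z\to W$ with the $\mathcal{I}_A$-injective $W\to X$, which is an $A$-weak equivalence by Corollary \ref{coro_I-inj=J-inj_WE}. Your approach avoids the pruning and deformation-retract step entirely by shrinking the generating set to $\mathcal{I}_A'$ at the outset, so that the cell complex produced by the (countable, since the domains are finite relative to closed T$_1$ inclusions) small object argument is literally a generalized CW($A$)-complex. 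The price is that $f$ is only $\mathcal{I}_A'$-injective, so Corollary \ref{coro_I-inj=J-inj_WE} cannot be quoted directly; your key observation --- that the weak-equivalence half of the implication (b)$\Rightarrow$(a) in Proposition \ref{propfibyed} uses only liftings against $\Sigma^{n-1}A\hookrightarrow\cn\Sigma^{n-1}A$ and $\ast\to A$, the cylinder maps entering only in the fibration half, while the $H$-cogroup hypothesis is still needed for injectivity on $\pi_0^A$ --- is accurate upon inspection of that proof. Your version is arguably cleaner, since the paper's strong-deformation-retract argument is only sketched by reference to \cite{MO1}; the paper's version has the compensating advantage of reusing the already-established factorization of Theorem \ref{theo_model_category} verbatim, so that $Z$ sits inside the actual cofibrant replacement $W$ of $X$.
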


\begin{proof}
Since the class $\mathcal{I}_A$ permits the small object argument we can factorize the map $\ast \to X$ into a relative $\mathcal{I}_A$-cell complex $i:\ast\to W$ followed by an $\mathcal{I}_A$-injective $p:W\to X$ (note that $p$ is an $A$-weak equivalence by \ref{coro_I-inj=J-inj_WE}). Moreover, in this factorization we only need to take a countably infinite sequence of pushouts since the domains of the maps of $\mathcal{I}_A$ are finite relative to $\mathcal{I}_A$ as we pointed out in the previous proof.

Thus, the space $W$ is constructed in much the same way as a generalized CW($A$)-complex, but some `extra' cells are attached taking pushouts with maps $\cyl^{k-1}A\to \cyl^{k}A$. However, since the inclusion $\cyl^{k-1}A\hookrightarrow \cyl^{k}A$ is a strong deformation retract, we can replace each step $W_n$ of the construction of $W$ by a generalized CW($A$)-complex $Z_n$ constructed from $Z_{n-1}$ by attaching the same cells that are attached to $W_{n-1}$, excluding those coming from maps $\cyl^{k-1}A\to \cyl^{k}A$. By a similar reasoning as the one in the proof of theorem 4.1 of \cite{MO1} we obtain that $Z_n$ is a strong deformation retract of $W_n$ and if we define $Z$ as the colimit of the spaces $Z_n$, $n\in\N$, we get that $Z$ is a generalized CW($A$)-complex and a strong deformation retract of $W$. Composing the strong deformation retract $j:Z\to W$ with the map $p$ we get an $A$-weak equivalence $f:Z\to X$.
\end{proof}

Now, we will apply the model category structure defined above to obtain an $A$-based version of Whitehead's theorem. To this end, we need the following proposition which relates Quillen's left homotopies with usual homotopies.

\begin{prop}
Let $A$ be a compact and T$_1$ topological $H$-cogroup. Let $X$ be an $A$-cofibrant space and let $Y$ be another topological space. Let $g_1,g_2:X\to Y$ be continuous maps. Then $g_1$ and $g_2$ are homotopic (in the usual sense) if and only if they are left homotopic (in the sense of Quillen).
\end{prop}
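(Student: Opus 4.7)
The plan is to bypass the question of whether $\cyl X$ itself is a cylinder object for $X$ in the model-theoretic sense (which would require showing the folding map $X\lor X\hookrightarrow\cyl X$ is an $A$-cofibration, a delicate pushout-product issue) by identifying usual homotopy with Quillen's \emph{right} homotopy via the standard path object $Y^I$, and then invoking the classical comparison between left and right homotopy for cofibrant-fibrant pairs.

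The first step is to check that $Y\xrightarrow{s_Y}Y^I\xrightarrow{(\ev_0^Y,\ev_1^Y)} Y\times Y$ is a path object factorization of the diagonal. The map $s_Y$ is a pointed homotopy equivalence, with $\ev_0^Y$ as a homotopy inverse and the reparametrization $H:Y^I\sm I_+\to Y^I$, $H(\alpha,s)(t)=\alpha(st)$, providing a pointed contraction; hence $s_Y$ is an $A$-weak equivalence. The evaluation $(\ev_0^Y,\ev_1^Y)$ is a pointed Hurewicz fibration, as already observed in the proof of Proposition \ref{prop_trivial_cofibs}, and therefore an $A$-fibration.

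The second step is the identification. Since $\cyl X=X\sm I_+$, the exponential law yields a bijection between pointed maps $\cyl X\to Y$ and pointed maps $X\to Y^I$ which sends a pointed homotopy $H:\cyl X\to Y$ with $Hi_0=g_1$ and $Hi_1=g_2$ to a pointed map $H^\sharp:X\to Y^I$ with $\ev_0^Y H^\sharp=g_1$ and $\ev_1^Y H^\sharp=g_2$. The latter is precisely the data of a right homotopy from $g_1$ to $g_2$ through the path object $Y^I$, so usual homotopy and right homotopy via $Y^I$ define the same relation on $\hom(X,Y)$.

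To conclude, observe that every topological space is fibrant in this model structure (remark after Theorem \ref{theo_model_category}), so in particular $Y$ is $A$-fibrant, while $X$ is $A$-cofibrant by hypothesis. Quillen's classical theorem on cofibrant-fibrant pairs then asserts that the left homotopy relation and the right homotopy relation agree on $\hom(X,Y)$ and that each is independent of the cylinder or path object used to represent it. Therefore $g_1$ and $g_2$ are left homotopic (via some cylinder object) if and only if they are right homotopic via $Y^I$, which by the previous step means they are homotopic in the usual sense. The principal technical content is the verification that $Y^I$ is a path object in this model category; the harder half, that $(\ev_0^Y,\ev_1^Y)$ is an $A$-fibration, is already available in the paper, so the argument essentially reduces to the standard model-categorical comparison.
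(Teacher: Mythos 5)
Your argument is correct and follows essentially the same route as the paper: both proofs hinge on verifying that $Y^I$ is a path object (with $(\ev_0^Y,\ev_1^Y)$ a pointed Hurewicz fibration, hence an $A$-fibration) and on translating between usual homotopies and right homotopies through $Y^I$ via the exponential law, then invoking the standard comparison of left and right homotopy for a cofibrant domain (Hovey, Proposition 1.2.5). The only minor difference is in the forward direction, where the paper factorizes $i_0+i_1:X\lor X\to\cyl X$ into an $A$-cofibration followed by an $A$-weak equivalence to produce a cylinder object and a left homotopy directly, whereas you pass through right homotopy and use the fibrancy of $Y$; both are standard and equally valid.
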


\begin{proof}
As usual, if $g_1$ and $g_2$ are homotopic, factorizing the map $i_0+i_1:X\lor X \to \cyl X$ into an $A$-cofibration followed by an $A$-weak equivalence we obtain a cylinder object $X'$ and a left homotopy $H:X'\to Y$ between $g_1$ and $g_2$.

Suppose now that $g_1$ and $g_2$ are left homotopic. Note that $Y^I$ is a path object for $Y$ since the map $(\ev_0,\ev_1):Y^I\to Y\times Y$ is a pointed Hurewicz fibration and the map $s_Y:Y\to Y^I$ (which was defined by $s_Y(y)(t)=y$ for $y\in Y$ and $t\in I$) is a homotopy equivalence. Since $X$ is $A$-cofibrant and $Y^I$ is a path object for $Y$ then by proposition 1.2.5 of \cite{Hov} there exists a right homotopy $K:X\to Y^I$ between $g_1$ and $g_2$. Applying the exponential law we get that $g_1$ and $g_2$ are homotopic in the usual sense.
\end{proof}

The following result is the $A$-based version of Whitehead's theorem mentioned before. It generalizes theorem 5.10 of \cite{MO1} and is related to theorem E.1 of chapter 2 of \cite{Far}.

\begin{theo}
Let $A$ be a compact and T$_1$ topological $H$-cogroup. Let $X$ and $Y$ be topological spaces and let $f:X\to Y$ be an $A$-weak equivalence. If $X$ and $Y$ are $A$-cofibrant spaces then $f$ is a homotopy equivalence. In particular, this holds if $X$ and $Y$ are generalized CW($A$)-complexes.
\end{theo}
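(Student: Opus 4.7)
The plan is to invoke the general model-categorical version of Whitehead's theorem relative to the cofibrantly generated structure provided by Theorem \ref{theo_model_category}, and then translate left homotopies back to ordinary pointed homotopies by means of the previous proposition.

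First, observe that in the model category of Theorem \ref{theo_model_category} every pointed topological space is fibrant, as noted in the remark immediately after that theorem. Consequently, the hypotheses that $X$ and $Y$ are $A$-cofibrant imply that they are in fact both cofibrant and fibrant. It is a standard consequence of Quillen's axioms that a weak equivalence between cofibrant-fibrant objects in any model category admits a homotopy inverse in the model-categorical sense (see, for instance, Proposition 1.2.8 of Hovey). Applying this to the $A$-weak equivalence $f:X\to Y$ yields a continuous map $g:Y\to X$ such that $gf$ is left-homotopic to $\id_X$ and $fg$ is left-homotopic to $\id_Y$ in the sense of Quillen.

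Now the previous proposition asserts that, whenever the source is an $A$-cofibrant space, a Quillen left homotopy between two maps coincides with an ordinary pointed homotopy. Since $X$ and $Y$ are both $A$-cofibrant, we may apply this proposition to the maps $gf,\id_X:X\to X$ and to $fg,\id_Y:Y\to Y$, concluding that $gf\simeq \id_X$ and $fg\simeq \id_Y$ in the usual sense. Thus $g$ is a two-sided homotopy inverse of $f$, and $f$ is a homotopy equivalence.

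The particular case follows from the corollary, established earlier in the section, that every generalized CW($A$)-complex is $A$-cofibrant whenever $A$ is an $H$-cogroup. There is no substantial obstacle in the argument; the only point requiring attention is to verify that the previous proposition is applicable on both sides, which is ensured precisely by the hypothesis that \emph{both} $X$ and $Y$ are $A$-cofibrant.
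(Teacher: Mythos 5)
Your proposal is correct and follows exactly the route the paper intends: the paper's proof is a one-line appeal to ``standard arguments'' using the fact that every space is fibrant, Proposition 1.2.8 of Hovey, and the preceding proposition identifying Quillen left homotopies with ordinary homotopies for $A$-cofibrant sources. You have simply written out those standard arguments in full, including the correct observation that both $X$ and $Y$ must be $A$-cofibrant so that the translation of left homotopies applies on both sides.
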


\begin{proof}
The result follows by standard arguments applying the previous proposition and the fact that every space is fibrant in this model category structure (cf. proposition 1.2.8 of \cite{Hov}).
\end{proof}

\section{Quillen functors}

In this section we will give three Quillen adjunctions related to the $A$-based cofibrantly generated model category structure developed before. The first of them shows that, under certain hypotheses, the exponential law is a Quillen adjunction. The second and third ones relate the model category structure defined in the previous section for different choices of the base space $A$. Namely, we give adjunctions between an $A$-based and a $B$-based structure when the space $B$ is a retract of $A$ and when the space $B$ is an iterated suspension of $A$.

From now on, if $A$ is a compact and T$_1$ topological $H$-cogroup, $\top_A$ will denote the model category of pointed topological spaces with the $A$-based cofibrantly generated model category structure defined in the previous section.

\begin{lemma}
Let $E$ and $B$ be topological spaces and let $p:E\to B$ be a continuous map. If $p$ is an $S^0$-fibration then $p$ is an $S^1$-fibration.
\end{lemma}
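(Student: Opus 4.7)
The plan is to invoke Proposition \ref{prop_S0_fib} to translate the hypothesis into the statement that $\mathcal{O}(p)$ is a Serre fibration, and then to verify, one generating family at a time, that $p$ satisfies the pointed homotopy lifting properties defining an $S^1$-fibration. Unpacking the definition, what is needed is pointed HLP with respect to $S^1$, with respect to $\cn\susp^{n-1}S^1\cong\cn S^n$ for every $n\in\N$, and with respect to $\cyl^n S^1$ for every $n\in\N$. The cone family is immediate: as pointed spaces $\cn S^n = \cn\susp^{k-1}S^0$ for $k=n+1\geq 2$, and these already appear among the generating spaces of an $S^0$-fibration, so the corresponding HLP is part of the hypothesis.

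For the two remaining families I would use a single general principle, stated and proved as a short preliminary observation: whenever $X$ is a pointed CW-complex whose basepoint is a $0$-cell, any $S^0$-fibration $p$ automatically has the pointed HLP with respect to $X$. Given pointed data $f:X\to E$ and $H:\cyl X\to B$ with $Hi_0=pf$, one lifts $H$ through the quotient $X\times I\to \cyl X$ to an unpointed homotopy $\hat{H}:X\times I\to B$ and defines
\begin{equation*}
F:(X\times\{0\})\cup(\{\ast\}\times I)\to E,\qquad F(x,0)=f(x),\quad F(\ast,t)=e_0.
\end{equation*}
When the basepoint of $X$ is a $0$-cell, the inclusion $(X\times\{0\})\cup(\{\ast\}\times I)\hookrightarrow X\times I$ is a relative CW-inclusion and a strong deformation retract, so it has the left lifting property against the Serre fibration $\mathcal{O}(p)$ supplied by Proposition \ref{prop_S0_fib}. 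The resulting extension $G:X\times I\to E$ satisfies $G(\ast,t)=e_0$ and therefore descends to a pointed map $\widetilde{H}:\cyl X\to E$, which is the desired lift.

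To finish, it suffices to observe that $S^1$ and $\cyl^n S^1\cong \susp((I^n)_+)$ each admit CW-structures whose basepoint is a $0$-cell, so the principle applies to both. The only mildly technical point I anticipate is identifying a convenient CW-structure on $\cyl^n S^1$ with the smash-product basepoint as a $0$-cell; this is straightforward once one uses the identification $\cyl^n S^1 = S^1\sm (I^n)_+$ and gives $(I^n)_+$ the standard cell decomposition with its disjoint basepoint as the unique $0$-cell outside $I^n$.
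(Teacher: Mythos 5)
Your proof is correct and follows essentially the same route as the paper: both reduce the pointed lifting problem to an unpointed one against the Serre fibration $\mathcal{O}(p)$ supplied by Proposition \ref{prop_S0_fib}, solve it over the product $X\times I$ using the relative lifting property for the pair $(X,\ast)$ with the constant map $e_0$ on $\{\ast\}\times I$, and then descend through the quotient $X\times I\to\cyl X$ because the lift is constant there. The only difference is presentational: the paper carries this out with explicit cube coordinates via $\cyl^{n}S^1\cong I^{n+1}/(\{0,1\}\times I^{n})$ and leaves the cone family implicit (exactly as you dispose of it, since $\cn\Sigma^{n-1}S^1=\cn\Sigma^{n}S^0$ is already an $S^0$-generator), whereas you package the same mechanism as a general lemma for CW-complexes whose basepoint is a $0$-cell.
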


\begin{proof}
We have to prove that the map $p$ has the homotopy lifting property with respect to $\cyl^n S^1$ for all $n\in\N_0$. Let $n\in\N_0$ and let $f:\cyl^n S^1 \to E$ and $H:\cyl(\cyl^n S^1)\to B$ be continuous maps such that $Hi_0=pf$. For $j\in\N$ let $q_j:I^{j}\to I^{j}/(\{0,1\}\times I^{j-1})$ be the quotient map and let $\varphi_j:I^{j}/(\{0,1\}\times I^{j-1})\to \cyl^{j-1}S^1$ be the homeomorphism defined by $\varphi[(t_1,t_2,\ldots,t_j)]=[(e^{2\pi t_1 i},t_2,\ldots,t_j)]$.

There is a commutative diagram of pointed spaces and maps
\begin{displaymath}
\xymatrix{I^{n+1} \ar[r]^(0.32){q_{n+1}} \ar[d]_{i_0} & I^{n+1}/(\{0,1\}\times I^{n}) \ar[r]^(0.67){\varphi_{n+1}} \ar[d]^{\overline{i_0}} & \cyl^{n} S^1 \ar[r]^(0.57){f} \ar[d]^{i_0} & E \ar[d]^{p} \\
I^{n+2} \ar[r]^(0.29){q_{n+2}} & I^{n+2}/(\{0,1\}\times I^{n+1}) \ar[r]^(0.65){\varphi_{n+2}} & \cyl(\cyl^{n} S^1) \ar[r]^(0.62)H & B}
\end{displaymath}
where $\overline{i_0}$ is the map that makes commutative the left square. 

Let $e_0$ be the base point of $E$. Consider the following diagram in the category of (unbased) topological spaces.
\begin{displaymath}
\xymatrix{i_0(I^{n+1})\cup (\{0,1\}\times I^{n+1}) \ar[r]^(0.78){\alpha} \ar[d]_\incl & E \ar[d]^{\mathcal{O}(p)} \\
I^{n+2} \ar[r]_{\mathcal{O}(H\varphi_{n+2}q_{n+2})} & B}
\end{displaymath}
where $\alpha$ is defined by $\alpha(x,0)=f\varphi_{n+1}q_{n+1}(x)$ for all $x\in I^{n+1}$ and $\alpha(t,y)=e_0$ for all $t\in\{0,1\}$ and $y\in I^{n+1}$. Note that $\alpha$ is well defined and continuous. Note also that the previous diagram is commutative.

Since $p$ is an $S^0$-fibration then, by \ref{prop_S0_fib}, $\mathcal{O}(p)$ is a Serre fibration. Hence, there exists a continuous map $H':I^{n+2}\to E$ such that $H'\incl=\alpha$ and $\mathcal{O}(p)H'=\mathcal{O}(H\varphi_{n+2}q_{n+2})$. Thus, $H'(t,y)=e_0$ for all $t\in\{0,1\}$ and $y\in I^{n+1}$. Then, there exists a continuous map $\overline{H'}:I^{n+2}/(\{0,1\}\times I^{n+1})\to E$ such that $\overline{H'}q_{n+2}=H'$.

Since $\overline{H'}$ preserves base points it can be seen as a map in the category of pointed topological spaces. Thus, let $K=\overline{H'}\circ(\varphi_{n+2})^{-1}:\cyl(\cyl^{n} S^1)\to E$. It is easy to prove that $Ki_0=f$ and $pK=H$.
\end{proof}

\begin{prop}
Let $A$ be a compact and Hausdorff topological $H$-cogroup. Let $L:\top_{S^1} \to \top_A$ be the functor $\_\_\land A$ and let $R:\top_A \to \top_{S^1}$ be the functor $\hom(A,\_\_)$. Let $\varphi$ be the natural isomorphism expressing $R$ as a right adjoint of $L$. Then $(L,R,\varphi)$ is a Quillen adjunction.
\end{prop}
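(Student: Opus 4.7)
The plan is to verify that the right adjoint $R=\hom(A,-)$ preserves fibrations and trivial fibrations, which suffices for $(L,R,\varphi)$ to be a Quillen adjunction by a standard adjunction argument. The main tool is the exponential law: since $A$ is compact Hausdorff, the pointed bijection $\hom_*(X\wedge A,Z)\cong\hom_*(X,\hom(A,Z))$ is natural in both variables, and under the identification $\cyl X\wedge A=I_+\wedge X\wedge A=\cyl(X\wedge A)$ it translates lifting problems cleanly: $R(p)$ has the pointed HLP with respect to $Y$ if and only if $p$ has the pointed HLP with respect to $Y\wedge A$.

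For the fibration part I would assume that $p$ is an $A$-fibration and unpack the definition of $S^1$-fibration through the adjunction, reducing the task to showing that $p$ has the HLP with respect to $\Sigma A$, $\cn\Sigma^{n}A$ and $\cyl^{n}\Sigma A$ for $n\in\N$. Invoking the proposition immediately after the definition of $A$-fibration (every $A$-fibration has the HLP with respect to generalized CW($A$)-complexes), it then suffices to exhibit each of these as a generalized CW($A$)-complex. The spaces $\Sigma A$ and $\cn\Sigma^{n}A$ are CW($A$)-complexes directly, by attaching a single $A$-1-cell to $\ast$, or by attaching an $A$-$n$-cell followed by an $A$-$(n+1)$-cell glued via the identity. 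For $\cyl^n\Sigma A$, I would apply Proposition \ref{IX_is_CW(A)} iteratively with $A'=A$ (which is locally compact Hausdorff) and core $\Sigma A$: starting from the trivial CW($\Sigma A$)-structure on $\Sigma A$, each $\cyl^n\Sigma A$ inherits a CW($\Sigma A$)-structure, and the final step is to observe that any CW($\Sigma A$)-complex is a generalized CW($A$)-complex, because a $\Sigma A$-$0$-cell is an $A$-$1$-cell and a $\Sigma A$-$k$-cell is an $A$-$(k+1)$-cell, so the sequential skeletal construction transfers.

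For the trivial fibration part I would assume $p\in\textnormal{Fib}\cap\textnormal{WE}$ and, via the adjunction together with Corollary \ref{coro_I-inj=J-inj_WE}, reduce to showing that $p$ has the RLP with respect to $L(\mathcal{I}_{S^1})$. The maps in $L(\mathcal{I}_{S^1})$ split into three families: the inclusions $\Sigma^nA\hookrightarrow\cn\Sigma^nA$ lie directly in $\mathcal{I}_A$; the map $\ast\to\Sigma A$ is the pushout of $A\hookrightarrow\cn A$ along $A\to\ast$ and hence lies in $\mathcal{I}_A$\textnormal{-cof}; and for $i_0:\cyl^{n-1}\Sigma A\to\cyl^n\Sigma A$, the ``moreover'' clause of Proposition \ref{IX_is_CW(A)} identifies $\cyl^{n-1}\Sigma A$ as a CW($\Sigma A$)-subcomplex of $\cyl^n\Sigma A$, which in turn is a generalized relative CW($A$)-complex after the cell-dimension shift. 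Proposition \ref{propfibyed}(c) then produces the required lift.

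The main obstacle is precisely the cylindrical generator $i_0:\cyl^{n-1}S^1\to\cyl^n S^1$: its image under $L$ is $i_0:\cyl^{n-1}\Sigma A\to\cyl^n\Sigma A$, which is not literally in $\mathcal{I}_A$ or $\mathcal{J}_A$. The trick is to recognise the codomain as a cell complex over the \emph{suspended} core $\Sigma A$ via Proposition \ref{IX_is_CW(A)} and then to transfer the cell structure down to $A$ by shifting dimensions; this is exactly the place where both the compact Hausdorff hypothesis (for the exponential law and for local compactness in Proposition \ref{IX_is_CW(A)}) and the decomposition of $\Sigma A$-cells as $A$-cells play an essential role.
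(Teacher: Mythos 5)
Your argument is correct, but it takes a genuinely different route from the paper's. The paper translates the lifting problems for $R(p)$ against the $S^0$-generators back through the adjunction, landing exactly on the spaces $A$, $\cn\Sigma^{n-1}A$ and $\cyl^{n}A$ that define an $A$-fibration; it concludes that $R(p)$ is an $S^0$-fibration and then upgrades this to an $S^1$-fibration via the separate preceding lemma (proved by passing through Serre fibrations). For the acyclicity half the paper never looks at trivial fibrations as such: it uses $[S^n\sm A,X]\cong[S^n,\hom(A,X)]$ and $S^n\sm A=\Sigma^nA$ to show that $R$ preserves \emph{all} weak equivalences, which combined with preservation of fibrations gives preservation of trivial fibrations. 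You instead push the $S^1$-generators forward through $L$ and solve the resulting lifting problems against $p$ itself, recognising $\Sigma A$, $\cn\Sigma^{n}A$ and $\cyl^{n}\Sigma A$ as generalized CW($A$)-complexes (via Proposition \ref{IX_is_CW(A)} applied with core $\Sigma A$, which is legitimate since $A$ is locally compact Hausdorff, followed by the dimension shift turning $\Sigma A$-cells into $A$-cells) and the cylinder inclusions as generalized relative CW($A$)-complex pairs, so that the HLP against generalized CW($A$)-complexes and part (c) of Proposition \ref{propfibyed} finish the job. Your route avoids both the $S^0\Rightarrow S^1$ fibration lemma and any computation with homotopy classes, at the cost of leaning on the cell-structure machinery; the paper's route is lighter on that machinery and yields the slightly stronger conclusion that $R$ carries every $A$-weak equivalence to an $S^1$-weak equivalence, not merely every trivial fibration to a trivial fibration. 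Both establish the Quillen adjunction.
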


\begin{proof}
It suffices to prove that $R$ is a right Quillen functor. First, we will prove that $R$ preserves fibrations. Let $p:X\to Y$ be an $A$-fibration. Let $n\in\N$ and suppose we have a commutative diagram
\begin{displaymath}
\xymatrix@C=30pt{\cn\Sigma^{n-1}S^0 \ar[d]_{i_0} \ar[r]^(0.45){f} & \hom(A,X)
  \ar[d]^{R(p)} \\ \cyl \cn\Sigma^{n-1}S^0 \ar[r]_(0.45){H} & \hom(A,Y) }
\end{displaymath}
Then, by naturality of $\varphi$, there is a commutative diagram
\begin{displaymath}
\xymatrix@C=30pt{\cn\Sigma^{n-1}S^0\sm A \ar[d]_{L(i_0)} \ar[r]^(0.65){\varphi^{-1}(f)} & X
  \ar[d]^{p} \\ (\cyl \cn\Sigma^{n-1}S^0 ) \sm A \ar[r]_(0.7){\varphi^{-1}(H)} & Y }
\end{displaymath}
Since
$$(\cn\Sigma^{n-1}S^0)\sm A = I \sm S^{n-1} \sm S^0 \sm A = \cn\Sigma^{n-1} A$$
and 
$$(\cyl\cn\Sigma^{n-1}S^0)\sm A = I_+ \sm I \sm S^{n-1} \sm S^0 \sm A = \cyl \cn\Sigma^{n-1} A$$
and $p$ is an $A$-fibration, there exists a continuous map $\widetilde{H}:(\cyl \cn\Sigma^{n-1}S^0 ) \sm A \to X$ such that $\widetilde{H} \circ L(i_0) = \varphi^{-1}(f)$ and $p \circ \widetilde{H} = \varphi^{-1}(H)$. The map $\varphi(\widetilde{H}):\cyl \cn\Sigma^{n-1}S^0 \to \hom(A,X)$ is the desired lift.

In a similar way one can prove that the map $R(p)$ has the HLP with respect to $S^0$ and with respect to $\cyl^n S^0$ for all $n\in\N$. Hence $R(p)$ is an $S^0$-fibration. Thus, by the previous lemma, $R(p)$ is an $S^1$-fibration.

We will prove now that $R$ preserves weak equivalences. Let $g:X\to Y$ be a continuous map and let $n\in\N_0$. By the exponential law, there is a commutative diagram
\begin{displaymath}
\xymatrix@C=30pt{[S^n\sm A,X] \ar[d]_{g_\ast} \ar[r]^(0.45){\overline{\varphi}} & [S^n,\hom(A,X)] 
  \ar[d]^{R(g)_\ast} \\ [S^n\sm A,Y] \ar[r]_(0.45){\overline{\varphi}} & [S^n,\hom(A,Y)] }
\end{displaymath}
where $\overline{\varphi}$ is the natural isomorphism induced by $\varphi$. Since $S^n\sm A=\Sigma^{n}A$, from the commutativity of the previous diagram it follows that if $g$ is an $A$-weak equivalence then $R(g):\hom(A,X)\to \hom(A,Y)$ is an $S^0$-weak equivalence, and hence it is an $S^1$-weak equivalence.

Thus, $R$ is a right Quillen functor.
\end{proof}

\begin{rem}
Although the functor $R$ of the previous proposition takes $A$-weak equivalences to $S^0$-weak equivalences and $A$-fibrations to $S^0$-fibrations it can not be regarded as a Quillen functor to $\top_{S^0}$ since $\top_{S^0}$ is not a Quillen model category with our structure (cf. remark \ref{rem_top_S0}).
\end{rem}

Now we turn our attention to the second adjunction mentioned at the beginning of this section which is essentially given by the following proposition.

\begin{prop}
Let $A$ and $B$ be (pointed) topological spaces and let $f$ be a continuous map.
\begin{enumerate}
\item If $B$ is a retract of $A$ and $f$ is an $A$-fibration then $f$ is a $B$-fibration.
\item If there exist continuous maps $\alpha:A\to B$ and $\beta:B\to A$ such that $\alpha\circ\beta\simeq \id_B$ and $f$ is an $A$-weak equivalence then $f$ is a $B$-weak equivalence.
\end{enumerate}
\end{prop}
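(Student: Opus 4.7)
My plan for both parts is to transfer lifting problems (respectively, homotopy classes) from the $B$-based setting to the $A$-based setting via the given maps between $A$ and $B$, solve them using the hypothesis on $f$, and then transfer the solution back. The key observation that makes this work is that the functors $\susp$, $\cn$ and $\cyl$ preserve retractions (since each is given by smashing with a fixed pointed space, namely $S^1$, $I$ and $I_+$), and likewise preserve pointed homotopies.

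For part (1), let $r\colon A\to B$ and $s\colon B\to A$ be continuous maps with $r s=\id_B$. Being an $A$-fibration means $f$ has the HLP with respect to $A$, to $\cn\susp^{n-1}A$ and to $\cyl^n A$ for all $n\in\N$; being a $B$-fibration is the analogous statement for $B$. I will prove a single general lemma: if $C$ is a retract of $C'$ (say $\rho\colon C'\to C$, $\sigma\colon C\to C'$ with $\rho\sigma=\id_C$) and $f$ has the HLP with respect to $C'$, then $f$ has the HLP with respect to $C$. Given a square $(g,H)\colon C\to X$, $\cyl C\to Y$, I form $(g\rho, H\circ\cyl\rho)$, use the HLP for $C'$ to produce $K\colon \cyl C'\to X$ with $Ki_0=g\rho$ and $fK=H\circ\cyl\rho$, and set $\tilde K=K\circ\cyl\sigma$; then $\tilde K i_0=g\rho\sigma=g$ and $f\tilde K=H\circ\cyl(\rho\sigma)=H$. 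Applying this with $(C',C)$ equal successively to $(A,B)$, $(\cn\susp^{n-1}A,\cn\susp^{n-1}B)$ and $(\cyl^n A,\cyl^n B)$ — each of which is a retract pair because $\susp$, $\cn$ and $\cyl$ are functors — yields part (1).

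For part (2), I need to show $f_\ast\colon\pi^B_n(X)\to\pi^B_n(Y)$ is bijective for every $n\in\N_0$. The maps $\susp^n\alpha\colon\susp^n A\to\susp^n B$ and $\susp^n\beta\colon\susp^n B\to\susp^n A$ satisfy $\susp^n\alpha\circ\susp^n\beta=\susp^n(\alpha\beta)\simeq\id_{\susp^n B}$ because $\susp^n$ preserves pointed homotopies. For surjectivity, given $g\colon\susp^n B\to Y$, apply the $A$-weak equivalence hypothesis to $g\circ\susp^n\alpha\colon\susp^n A\to Y$ to obtain $h\colon\susp^n A\to X$ with $fh\simeq g\circ\susp^n\alpha$; then $\tilde g:=h\circ\susp^n\beta$ satisfies $f\tilde g\simeq g\circ\susp^n(\alpha\beta)\simeq g$. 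For injectivity, if $h_1,h_2\colon\susp^n B\to X$ satisfy $fh_1\simeq fh_2$, precomposing with $\susp^n\alpha$ gives $f(h_1\circ\susp^n\alpha)\simeq f(h_2\circ\susp^n\alpha)$, so by the injectivity of $f_\ast\colon\pi^A_n(X)\to\pi^A_n(Y)$ we get $h_1\circ\susp^n\alpha\simeq h_2\circ\susp^n\alpha$; precomposing now with $\susp^n\beta$ and using $\susp^n(\alpha\beta)\simeq\id$ yields $h_1\simeq h_2$.

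There is no genuinely hard step here; the only points requiring mild care are verifying that the formation of $\cn\susp^{n-1}$ and $\cyl^n$ respects retractions (immediate from functoriality) and that in part (2) the composition of the relevant homotopies is allowed because pointed homotopy is preserved under pre- and post-composition by pointed maps. Thus both items follow from the same transfer principle, specialised to the ``strict retract'' case for HLP and to the ``up-to-homotopy retract'' case for weak equivalence.
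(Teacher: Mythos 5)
Your proof is correct and follows essentially the same route as the paper: part (1) is the observation that the relevant test maps for $B$ are retracts of those for $A$ (you inline the standard argument that the HLP passes to retracts, where the paper just invokes it), and part (2) is exactly the paper's ``retract of an isomorphism is an isomorphism'' argument for $f_\ast$ on $[\Sigma^n B,-]$, unwound into explicit surjectivity and injectivity checks.
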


\begin{proof}
($1$) If $B$ is a retract of $A$ then the maps $i_0:B\to \cyl B$, $i_0:\cn \Sigma^{n-1} B\to \cyl \cn\Sigma^{n-1}B$, $n\in\N$, and $i_0:\cyl^{n-1} B\to \cyl^n B$, $n\in\N$, are retracts of the maps $i_0:A\to \cyl A$, $i_0:\cn \Sigma^{n-1} A\to \cyl \cn\Sigma^{n-1}A$, $n\in\N$, and $i_0:\cyl^{n-1} A\to \cyl^n A$, $n\in\N$, respectively. If $f$ is an $A$-fibration, $f$ has the RLP with respect to this last collection of maps and hence $f$ has the RLP with respect to the first collection of maps. Thus $f$ is a $B$-fibration.

($2$) For each $n\in\N_0$ there is a commutative diagram
\begin{displaymath}
\xymatrix@C=30pt{[\Sigma^{n}B,X] \ar[d]_{f_\ast} \ar[r]^(0.5){(\Sigma^n\alpha)^\ast} & [\Sigma^{n} A,X] \ar[d]_{f_\ast} \ar[r]^(0.5){(\Sigma^n\beta)^\ast} & [\Sigma^{n} B,X]  \ar[d]^{f_\ast} \\  
[\Sigma^{n}B,Y] \ar[r]^(0.5){(\Sigma^n\alpha)^\ast} & [\Sigma^{n}A,Y] \ar[r]^(0.5){(\Sigma^n\beta)^\ast} & [\Sigma^{n}B,Y] }
\end{displaymath}
Since the horizontal compositions are the identity maps, the map $f_\ast:[\Sigma^{n}B,X]\to[\Sigma^{n}B,Y]$ is a retract of the map $f_\ast:[\Sigma^{n}A,X]\to[\Sigma^{n}A,Y]$ which is an isomorphism by hypothesis. Then $f_\ast:[\Sigma^{n}B,X]\to[\Sigma^{n}B,Y]$ is an isomorphism for all $n\in\N_0$ and hence $f$ is a $B$-weak equivalence.
\end{proof}

\begin{prop}
Let $A$ and $B$ be compact and T$_1$ topological $H$-cogroups. Let $L:\top_B \to \top_A$ and $R:\top_A \to \top_B$ be the identity functors. Let $\phi$ be the natural isomorphism defined by the identity maps in all objects. If $B$ is a retract of $A$ then $(L,R,\phi)$ is a Quillen adjunction.
\end{prop}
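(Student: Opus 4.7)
The plan is to observe first that since $L$ and $R$ are both identity functors and $\phi$ is given by identities, the triple $(L,R,\phi)$ is trivially an adjunction; only the Quillen condition needs to be verified. By the standard characterization, it is enough to check that $R$ is a right Quillen functor, i.e.\ that $R$ sends $A$-fibrations to $B$-fibrations and trivial $A$-fibrations to trivial $B$-fibrations. Since $R$ is the identity on maps, this reduces to comparing the two classes $\textnormal{Fib}$ and $\textnormal{Fib}\cap\textnormal{WE}$ in $\top_A$ and $\top_B$ respectively.

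First I would handle fibrations. If $f$ is an $A$-fibration, then by part~(1) of the previous proposition, the hypothesis that $B$ is a retract of $A$ immediately gives that $f$ is a $B$-fibration. So $R$ preserves fibrations.

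Next I would handle trivial fibrations. The hypothesis that $B$ is a retract of $A$ provides maps $\alpha:A\to B$ and $\beta:B\to A$ with $\alpha\circ\beta=\id_B$, and in particular $\alpha\circ\beta\simeq \id_B$. Hence part~(2) of the previous proposition applies and every $A$-weak equivalence is a $B$-weak equivalence. Combining this with the previous step, any map in $\textnormal{Fib}_A\cap \textnormal{WE}_A$ lies in $\textnormal{Fib}_B\cap \textnormal{WE}_B$, so $R$ also preserves trivial fibrations.

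There is no real obstacle here; the proposition is essentially a corollary of the previous one, the only point to check being that a retraction $\alpha\circ\beta=\id_B$ satisfies both hypotheses of that proposition simultaneously. Thus $R$ is a right Quillen functor and $(L,R,\phi)$ is a Quillen adjunction.
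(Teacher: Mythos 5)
Your proof is correct and follows exactly the paper's route: the paper likewise reduces the claim to showing that $R$ is a right Quillen functor and cites the immediately preceding proposition, whose two parts give preservation of fibrations and of weak equivalences under the retraction $\alpha\circ\beta=\id_B$. You merely spell out the details the paper leaves implicit.
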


\begin{proof}
It suffices to prove that $R$ is a right Quillen functor, which follows from the previous proposition.
\end{proof}

Clearly, if $B$ is a retract of $A$ and $A$ and $B$ are homotopy equivalent then the adjunction of above turns out to be a Quillen equivalence.

Now we turn our attention to the third adjunction which deals with suspensions.

\begin{prop}
Let $A$ be a (pointed) topological space, let $n\in\N$ and let $f$ be a continuous map. If $A$ is the suspension of a locally compact and Hausdorff space and $f$ is an $A$-fibration then $f$ is a $\Sigma^n A$-fibration.
\end{prop}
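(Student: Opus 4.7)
The plan is to reduce to the earlier proposition (stated just after the definition of $A$-fibration) which asserts that every $A$-fibration has the homotopy lifting property with respect to the class of generalized CW($A$)-complexes. By definition, for $f$ to be a $\Sigma^nA$-fibration it must have the HLP with respect to $\Sigma^nA$, with respect to $\cn\Sigma^{k-1}(\Sigma^nA)=\cn\Sigma^{n+k-1}A$ for each $k\in\N$, and with respect to $\cyl^k\Sigma^nA$ for each $k\in\N$. I would therefore aim to exhibit each of these three families as CW($A$)-complexes and then invoke the cited proposition uniformly.

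The first two families are handled by direct cell-attaching arguments. Attaching a single $A$-$n$-cell to the base point along the unique map $\Sigma^{n-1}A\to\ast$ realises $\Sigma^nA$ as a CW($A$)-complex (its single nontrivial skeleton is the pushout defining $\Sigma^nA$). Starting from this CW($A$)-structure on $\Sigma^{n+k-1}A$ and attaching one further $A$-$(n+k)$-cell via the identity as attaching map $\Sigma^{n+k-1}A\to\Sigma^{n+k-1}A$ produces the pushout $\cn\Sigma^{n+k-1}A$, endowing it with a CW($A$)-structure as well.

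For the iterated cylinder $\cyl^k\Sigma^nA$ I would rely on Proposition \ref{IX_is_CW(A)}, which, under precisely the standing hypothesis $A=\Sigma A'$ with $A'$ locally compact and Hausdorff, guarantees that the reduced cylinder of a CW($A$)-complex is itself a CW($A$)-complex. Iterating this conclusion $k$ times, beginning with the CW($A$)-structure on $\Sigma^nA$, yields a CW($A$)-structure on $\cyl^k\Sigma^nA$. Note that the hypothesis on $A$ is independent of $k$, so it survives every step of the iteration.

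With all three families now recognised as CW($A$)-complexes, the proposition on the HLP of $A$-fibrations against generalized CW($A$)-complexes applies in each case and delivers the required liftings. Hence $f$ is a $\Sigma^nA$-fibration. The step that requires the most care will be the cylinder iteration, since this is the only place where the hypothesis that $A$ is the suspension of a locally compact Hausdorff space is actually used; the identifications of $\Sigma^nA$ and $\cn\Sigma^{n+k-1}A$ as CW($A$)-complexes are routine.
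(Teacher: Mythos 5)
Your proof is correct, and the heart of it coincides with the paper's: both arguments identify the iterated cylinders $\cyl^k\Sigma^nA$ as CW($A$)-complexes by iterating Proposition \ref{IX_is_CW(A)} starting from the evident one-cell structure on $\Sigma^nA$, and you are right that this is the only place where the hypothesis $A=\Sigma A'$ with $A'$ locally compact Hausdorff is used. Where you diverge is in how the homotopy lifting property is then extracted. You invoke the proposition that an $A$-fibration has the HLP with respect to all generalized CW($A$)-complexes, applied directly to the spaces $\Sigma^nA$, $\cn\Sigma^{n+k-1}A$ and $\cyl^k\Sigma^nA$. The paper instead works with the pair: it notes that $(\cyl(\cyl^m\Sigma^nA),\,i_0(\cyl^m\Sigma^nA))$ is a relative CW($A$)-complex, so by Proposition \ref{propfibyed} the inclusion $i_0$ has the LLP with respect to trivial $A$-fibrations and is therefore an $A$-cofibration; since it is also a homotopy equivalence it lies in $\textnormal{Cof}\cap\textnormal{WE}$, whence by Proposition \ref{prop_trivial_cofibs} it has the LLP with respect to all of $\textnormal{Fib}$. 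Your route is the more elementary one, needing only the early HLP proposition rather than the model-category results \ref{propfibyed} and \ref{prop_trivial_cofibs} (whose $H$-cogroup hypothesis is in any case automatic here, $A$ being a suspension). On the other hand, part of your work is unnecessary: the spaces $\cn\Sigma^{k-1}(\Sigma^nA)=\cn\Sigma^{n+k-1}A$ already occur verbatim among the test spaces in the definition of $A$-fibration, so the HLP against them is immediate and needs no cell structure --- the paper dispatches this family in one sentence. Your explicit CW($A$)-structures on $\Sigma^nA$ and $\cn\Sigma^{n+k-1}A$ are nonetheless correct.
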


\begin{proof}
It is clear that for all $m\in \N$, $f$ has the homotopy lifting property with respect to the spaces $\cn \Sigma^{m-1}(\Sigma^n A)$ since $f$ is an $A$-fibration. It remains to prove that $f$ has the homotopy lifting property with respect to the spaces $\cyl^m\Sigma^n A$ for all $m\in\N_0$.

Let $m\in\N_0$. By \ref{IX_is_CW(A)},  $(\cyl(\cyl^m\Sigma^n A),i_0(\cyl^m\Sigma^n A))$ is a relative CW($A$)-complex. Then, by \ref{propfibyed}, the map $i_0:\cyl^m\Sigma^n A\to \cyl(\cyl^m\Sigma^n A)$ has the LLP with respect to the trivial $A$-fibrations and hence it is an $A$-cofibration. But since it is also a homotopy equivalence, from \ref{prop_trivial_cofibs} it follows that it has the LLP with respect to the $A$-fibrations. Hence, $f$ has the homotopy lifting property with respect to the space $\cyl^m\Sigma^n A$.
\end{proof}

Since every $A$-weak equivalence is a $\Sigma^n A$-weak equivalence, from the previous proposition we obtain the following.

\begin{prop}
Let $A$ be the suspension of a compact and Hausdorff space and let $n\in\N$. Let $L:\top_{\Sigma^n A} \to \top_A$ and $R:\top_A \to \top_{\Sigma^n A}$ be the identity functors. Let $\phi$ be the natural isomorphism defined by the identity maps in all objects. Then $(L,R,\phi)$ is a Quillen adjunction.
\end{prop}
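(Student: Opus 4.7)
My plan is to verify that $R:\top_A\to\top_{\Sigma^n A}$, which is the identity on underlying spaces, is a right Quillen functor; this suffices because the adjoint $L$ is likewise the identity, making the naturality of $\phi$ automatic. Concretely, I will show that $R$ sends $A$-fibrations to $\Sigma^n A$-fibrations and $A$-weak equivalences to $\Sigma^n A$-weak equivalences, so that both fibrations and trivial fibrations are preserved.

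The first of these is an immediate invocation of the previous proposition: since $A$ is a suspension of a compact Hausdorff space, and compact Hausdorff spaces are in particular locally compact Hausdorff, every $A$-fibration is automatically a $\Sigma^n A$-fibration. For the second, I would argue directly from the definition of $A$-homotopy groups. If $f:X\to Y$ is an $A$-weak equivalence, then $f_\ast:[\Sigma^k A, X]\to[\Sigma^k A, Y]$ is a bijection for every $k\in\mathbb{N}_0$, and in particular for every $k\geq n$. Reindexing via the identification $\Sigma^{k-n}(\Sigma^n A)\cong \Sigma^k A$, this says exactly that $f_\ast:\pi^{\Sigma^n A}_m(X)\to\pi^{\Sigma^n A}_m(Y)$ is bijective for every $m\in\mathbb{N}_0$, i.e., that $f$ is a $\Sigma^n A$-weak equivalence. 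Combining the two, $R$ also sends trivial fibrations to trivial fibrations, and thus is a right Quillen functor.

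The only remaining item is the bookkeeping required to be sure that $\top_{\Sigma^n A}$ is indeed a model category in the sense of Theorem \ref{theo_model_category}, for which $\Sigma^n A$ must be a compact T$_1$ topological $H$-cogroup; this is automatic, since iterated reduced suspensions preserve compactness and the T$_1$ property (inherited from the compact Hausdorff base), and every reduced suspension is an $H$-cogroup. Given how neatly the previous proposition and the $A$-homotopy-group comparison prepare the ground, there is no significant obstacle to overcome; the argument is essentially a two-line citation.
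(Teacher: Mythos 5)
Your proposal is correct and follows exactly the paper's route: the paper derives this proposition from the preceding one (preservation of fibrations) together with the observation that every $A$-weak equivalence is a $\Sigma^n A$-weak equivalence, which is precisely your reindexing argument. The extra bookkeeping you mention (that $\Sigma^n A$ is again a compact Hausdorff $H$-cogroup, and that compact Hausdorff implies locally compact Hausdorff so the previous proposition applies) is left implicit in the paper but is verified correctly in your write-up.
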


If, in addition, $A$ is homotopy equivalent to a compact and $(k-1)$-connected CW-complex of dimension $k$ for some $k\in\N$, then the adjunction of the previous proposition induces an equivalence of categories between $\ho\top_{\Sigma^n A}$ and the full subcategory of $\ho\top_A$ whose objects are the $A$-$(n-1)$-connected spaces. The proof of this fact follows from theorem 3.2 of \cite{EHR} in the following way. If $k\geq 2$, from homology decomposition (cf. theorem 4H.3 of \cite{Hat}) it follows that a $(k-1)$-connected CW-complex of dimension $k$ is homotopy equivalent to a wegde of spheres of dimension $k$. Note that this also holds for $k=1$.

Then, if $n\in\N$ an $A$-cofibrant space is $(n+k-1)$-connected if and only if it is $A$-$(n-1)$-connected. Indeed, the first implication is clear while the converse can be proved in a similar way to the first item of proposition 3.1 of \cite{EHR}.

On the other hand, if $B$ is a wegde of spheres of dimension $k$ and $X$ is a pointed space then $\pi_n^B(X)$ is a direct product of copies of the group $\pi_{n+k}(X)$. Hence a map is a $B$-weak equivalence if and only if it induces isomorphisms in $\pi_m$ for $m\geq k$. Thus, under the assumptions on the space $A$, a map is an $A$-weak equivalence if and only if it induces isomorphisms in $\pi_m$ for $m\geq k$. 

Hence, applying theorem 3.2 of \cite{EHR} one gets an equivalence of categories
\begin{displaymath}
\begin{array}{rcl}
\ho\top_{\Sigma^n A} & \cong & \ho\top|_{(n+k-1)-\textnormal{conn}} \cong  (\ho\top|_{(k-1)-\textnormal{conn}})|_{(n+k-1)-\textnormal{conn}} \cong \\ & \cong & \ho\top_A|_{A-(n-1)-\textnormal{conn}}
\end{array}
\end{displaymath}
where, for a category $\mathscr{C}$ and for $l\in\N_0$, $\mathscr{C}|_{l-\textnormal{conn}}$ and $\mathscr{C}|_{A-l-\textnormal{conn}}$ denote the full subcategories of $\mathscr{C}$ whose objects are the $l$-connected spaces and the $A$-$l$-connected spaces respectively.

Note that although the model category developed in \cite{EHR} is different from ours, their weak $n$-equivalences coincide with our $S^n$-weak equivalences and hence the homotopy category they obtain by inverting the weak $n$-equivalences is the same as $\ho\top_{S^n}$.

\end{document}